\gdef\n@te#1#2{\leavevmode\vadjust{%
 {\setbox\z@\hbox to\z@{\strut#1}%
  \setbox\z@\hbox{\raise\dp\strutbox\box\z@}\ht\z@=\z@\dp\z@=\z@%
  #2\box\z@}}}
\gdef\leftnote#1{\n@te{\hss#1\quad}{}}
\gdef\rightnote#1{\n@te{\quad\kern-\leftskip#1\hss}{\moveright\hsize}}
\gdef\?{\FN@\qumark}
\gdef\qumark{\ifx\next"\DN@"##1"{\leftnote{\rm##1}}\else
 \DN@{\leftnote{\rm??}}\fi{\rm??}\next@}}
\DeclareFontFamily{OT1}{wncyr}{\hyphenchar\font45 }
\DeclareFontShape{OT1}{wncyr}{m}{n}{%
   <5> <6> <7> <8> <9> gen * wncyr
   <10> <10.95> <12> <14.4> <17.28> <20.74>  <24.88>wncyr10}{}
\DeclareFontShape{OT1}{wncyr}{m}{it}{%
   <5> <6> <7> <8> <9> gen * wncyi
   <10> <10.95> <12> <14.4> <17.28> <20.74> <24.88> wncyi10}{}
\DeclareFontShape{OT1}{wncyr}{m}{sc}{%
   <5> <6> <7> <8> <9> <10> <10.95> <12> <14.4>
   <17.28> <20.74> <24.88>wncysc10}{}
\DeclareFontShape{OT1}{wncyr}{b}{n}{%
   <5> <6> <7> <8> <9> gen * wncyb
   <10> <10.95> <12> <14.4> <17.28> <20.74> <24.88>wncyb10}{}
\theoremstyle{plain}
\newtheorem{theorem}{Theorem}
\newtheorem{proposition}[theorem]{Proposition}
\newtheorem{lemma}[theorem]{Lemma}
\newtheorem{corollary}[theorem]{Corollary}
\theoremstyle{definition}
\newtheorem{problem}[theorem]{Problem}
\newtheorem{definition}[theorem]{Definition}
\newtheorem{remark}[theorem]{Remark}
\newtheorem{nothing*}[theorem]{}
\newtheorem{subnothing*}[sub]{}
\newtheorem{example}[theorem]{Example}
\newtheorem{question}[theorem]{Question}
\theoremstyle{remark}
\newcommand{\lb}{{\rm(\hskip -.1mm}}
\newcommand{\rb}{{\hskip .13mm\rm)}}
\begin{document}

\title[Variations on the theme of Zariski's Cancellation Problem]
{Variations on the theme of\\ Zariski's Cancellation Problem}

\author[Vladimir L. Popov]{Vladimir L. Popov${}^{1}$}
\thanks{${}^1$  Steklov Mathematical Institute,
Russian Academy of Sciences, Gub\-kina 8, Moscow
119991, Russia. {\it E-mail address}:  popovvl@mi.ras.ru}

\begin{abstract}
This  is an expanded version of
the talk by the author at the confe\-ren\-ce\;{\it Poly\-no\-mi\-al Rings and Affine Algebraic Geometry},
February 12--16, 2018, Tokyo Me\-tropolitan University, Tokyo, Ja\-pan. Considering
a local ver\-sion of the Zariski Cancellation Prob\-lem naturally leads
to explo\-ration of some classes of varieties of special kind and their
equivariant versions.\;We discuss several to\-pics in\-spi\-red by this
exploration, including the problem of classify\-ing a class of
affine algebraic groups that are naturally singled out in
studying the conjugacy problem for algebraic subgroups of the Cremona groups.
\end{abstract}

\maketitle

\subsection*{1. Introduction}
This is an expanded version of
the
talk
by the author
at the conference {\it Polyno\-mial Rings and Affine Algebraic Geometry},
February 12--16, 2018, Tokyo Metro\-politan University, Tokyo, Japan.

Our starting point is a local version of the Zariski Cancellation Problem
(LZCP). Its consideration naturally leads to distinguishing a class of
varieties of a special kind, called here flattenable, and a more general
class of  locally flattenable varieties. We discuss the relevant examples,
including flattenabi\-lity of affine algebraic groups and the related
varieties, in particular, we prove that all smooth spherical varieties
are locally flattenable. This is completed by answering (LZCP). We
then consider the equivariant versions of flattenability and obtain a
series of results on equivariant flattenability of affine algebraic groups.
In particular, we prove that a reductive algebraic group is equivariantly
flattenable if and only if it is linearly equivariantly flattenable, and we
prove that equivariant flattenability of a Levi subgroup of a connected
affine algebraic group $G$ implies that of $G$. The latter  yields
that every connected solvable affine algebraic group is equivariantly
flattenable. As an application, we briefly survey
a special role of equivariantly flattenable subgroups in the rational
linearization problem. Then we dwell on the classi\-fication problem
of equivariantly flattenable affine algebraic groups $G$. We prove that
every such $G$ is special in the sense of Serre, which implies that
if $G$ is reductive equivariantly flattenable, then its derived group if
a product of the groups of types ${\mathrm {SL}}$ and ${\rm Sp}$.
We complete this discussion with the unexpected recent examples
of reductive equivariantly flattenable groups, whose derived groups do contain
factors of type ${\rm Sp}$. In the last section, the local version of
equivariantly flattenable
varieties and the relevant version of the Gromov problem are briefly
considered.

\vskip 2mm
\noindent
{\it Acknowledgement.} I am grateful to the referee for thoughtful reading
and sug\-gestions.

\subsubsection*{\it Notation, Conventions, and Terminology}
We fix an algebraically closed field $k$ of characteristic zero.
In what follows, as in
 \cite{Bo1991}, \cite{PV1994},
 variety means algebraic variety over $k$ in the sense of Serre
(so algebraic group means
algebraic group over $k$). Unless otherwise specified, all topolo\-gical
terms refer to the Zariski topology. We use freely the standard
notation and conventions of
loc. cit., where the proofs of the unreferenced claims
 and/or the relevant references can be found. Action
 of an algebraic group on an algebraic
variety means algebraic (morphic) left action;  homomorphism of
an algebraic group means  algebraic homomorphism.

Recall that a {\it Levi subgroup} of a connected affine
algebraic group $G$ is its (necessarily connected reductive) subgroup
$L$ such that $G$ is the semi-direct product of $L$ and
the unipotent radical ${\mathcal R}_uG$ of $G$; since
${\rm char}(k)=0$,
by  a result of G. D. Mostow, Levi subgroups exist and are conjugate
in $G$ (cf.\;\cite[11.22]{Bo1991}).

We also use the following notation:
\begin{enumerate}[\hskip 4.2mm $\cdot$]
\item $A^*$ is the group of units of an associative $k$-algebra $A$ with identity.

\item ${\rm Mat}_{n\times m}$ is the $k$-vector space of all
$n\times m$-matrices with entries in $k$; for $n=m$, it is naturally endowed with
the $k$-algebra structure.

\item $A^{\top}$ is the transpose of a matrix $A\in {\rm Mat}_{n\times m}$.
\end{enumerate}

\subsection*{2. The Zariski Cancellation Problem 
} So is called
the following qu\-estion:
\begin{equation}\label{ZCP}
\begin{split}
&\mbox{\it Are there affine varieties $X$ and $Y$ such that
$Y$ and  } \\[-1.5mm]
&\mbox{\it $X\times Y$ are isomorphic respectively to
${\mathbf A}^{\hskip -.4mm s}$ and $
{\mathbf A}^{\hskip -.4mm s+d}$,  }\\[-1.5mm]
&\mbox{\it but
$X$ is not isomorphic to
${\mathbf A}^{\hskip -.4mm d}$?
}
\end{split}
\tag{ZCP}
\end{equation}

\vskip 2mm\noindent
At this writing (January 2019), for $d>2$, it is still open.
A historical survey about
the Zariski Cancellation Problem is given in \cite{Gu2015}.

Our starting point is a local version of this problem.
Making precise its formulation (see \eqref{LZCP} in Subsection 5)
leads to distinguishing the following class of varieties:
\begin{definition}\label{def1} An irreducible variety $X$ is called
\begin{enumerate}[\hskip 4.2mm\rm(a)]
\item {\it flattenable} if $X$ isomorphic to an open subset
of an affine space;
\item {\it locally flattenable} if for every point  $x\in X$
there is a flattenable open subset of $X$ containing $x$.
\end{enumerate}
\end{definition}

\subsection*{3. Terminology} Under other names, locally
flattenable varieties  appeared in the literature long ago.
The earliest  reference known to the author is \cite[p. 2-09]{Ch1958}
where Chevalley calls them {\it special} varieties.
In \cite{Ma1974} Chevalley terminology is used for
the definition of $R$-equivalence.\,In \cite{Ak1993} these
varieties appear as {\it algebraic spaces}, in \cite{BHSV2008}
as {\it plain varieties}, and in \cite{BB2014} and \cite{Pe2017}
as {\it uniformly rational varieties}. The term {\it flattenable variety}
is coined in \cite{Po2013_1}, where  special properties of
linearly equivariantly flat\-ten\-able algebraic subgroups
(see below Definition \ref{def3}) of the Cremona groups
have been revealed (this topic is briefly surveyed
in Subsection 9 below).

By Definition \ref{def1}, every locally flattenable variety
is rational. Whether the converse is true is open
at this writing (January 2019):
\begin{equation}\label{Gr}
\mbox{\it Is every irreducible smooth rational variety locally flattenable?}
\tag{Gr}
\end{equation}
This problem  was raised
by M. Gromov in \cite[$3.5.{\rm E}'''$]{Gr1989}
(for projective varieties).

\subsection*{4. Examples of Locally Flattenable Varieties}

\subsubsection*{\it {\rm 1.} Homogeneous Spaces}

\begin{theorem}Let $X$ be an irreducible variety. If the natural
action of ${\rm Aut}(X)$ on $X$ is transitive, then the following
properties are equivalent:
\begin{enumerate}[\hskip 4.2mm\rm(a)]
\item $X$ is a rational variety;
\item $X$ is a locally flattenable variety.
\end{enumerate}
\end{theorem}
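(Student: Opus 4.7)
The plan is to show the two implications separately. The direction (b)$\Rightarrow$(a) is essentially immediate and already noted in the excerpt: a locally flattenable variety is covered by open subsets each isomorphic to an open subset of some $\mathbf{A}^{d}$, and any such open subset is a rational variety; hence $X$ itself is rational.

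For the nontrivial direction (a)$\Rightarrow$(b), I would exploit the transitivity of the $\mathrm{Aut}(X)$-action to spread a single flattenable open subset over all of $X$. First, set $d=\dim X$. Since $X$ is rational, there is a birational isomorphism $X\dashrightarrow \mathbf{A}^{d}$, which restricts to an isomorphism $U\xrightarrow{\sim} V$ between a nonempty open subset $U\subseteq X$ and a nonempty open subset $V\subseteq \mathbf{A}^{d}$. By Definition \ref{def1}(a), the open subset $U$ of $X$ is flattenable.

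Next, given an arbitrary point $x\in X$, fix any $u\in U$. By the transitivity hypothesis, there exists $\varphi\in\mathrm{Aut}(X)$ with $\varphi(u)=x$. Then $\varphi(U)$ is an open subset of $X$ containing $x$, and $\varphi$ restricts to an isomorphism $U\xrightarrow{\sim}\varphi(U)$; composing with $U\xrightarrow{\sim}V\hookrightarrow\mathbf{A}^{d}$ identifies $\varphi(U)$ with an open subset of $\mathbf{A}^{d}$. Hence $\varphi(U)$ is a flattenable open neighborhood of $x$ in $X$. Since $x$ was arbitrary, $X$ is locally flattenable.

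There is no serious obstacle here: the argument uses only the definition of rationality (producing one flattenable open set) and the hypothesis of transitivity of $\mathrm{Aut}(X)$ (translating that open set to cover every point). The statement is therefore a direct formal consequence of the definitions, and the only subtlety to check is that a biregular automorphism of $X$ carries a flattenable open subset to a flattenable open subset, which is obvious from Definition \ref{def1}(a).
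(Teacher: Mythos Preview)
Your proof is correct and follows essentially the same route as the paper's: rationality yields one flattenable open subset $U$, and transitivity of $\mathrm{Aut}(X)$ translates $U$ to cover every point, while (b)$\Rightarrow$(a) is immediate from Definition~\ref{def1}. The paper's version is simply more terse, writing $X=\bigcup_{g}gU$ in place of your pointwise argument.
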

\begin{proof} If (a) holds, then $X$ contains an open flattenable
subset $U$. Since $U$ and $gU$ for any $g\in {\rm Aut}(X)$ are
isomorphic, (b) follows from the equality $X=\bigcup_{g\in G}gU$
(the latter holds because of the transitivity condition).
Defi\-nition \ref{def1} implies (b)$\Rightarrow$(a).
\quad $\square$ \end{proof}

\begin{corollary}\label{hs} Let $G$ be a connected affine algebraic
group and let $H$ be a closed subgroup of $G$. Then the following
properties are equivalent:
\begin{enumerate}[\hskip 4.2mm\rm(a)]
\item $G/H$ is a rational variety;
\item $G/H$ is a locally flattenable variety.
\end{enumerate}
\end{corollary}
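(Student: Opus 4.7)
The plan is to reduce Corollary \ref{hs} directly to the preceding theorem by checking that the hypothesis of that theorem is satisfied for $X=G/H$. The implication (b)$\Rightarrow$(a) is immediate from Definition \ref{def1}(b), since every locally flattenable variety is rational, so no assumption on $G$ or $H$ is needed there.

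For (a)$\Rightarrow$(b), I would observe that $G$ acts on the homogeneous space $G/H$ by left translations, and each left translation $\ell_g\colon xH\mapsto gxH$ is an algebraic automorphism of the variety $G/H$. This gives a homomorphism $G\to \mathrm{Aut}(G/H)$ whose image acts transitively on $G/H$ (by definition of the coset space). Consequently the full group $\mathrm{Aut}(G/H)$ acts transitively on $G/H$, so $G/H$ satisfies the hypothesis of the preceding theorem. Applying that theorem with $X=G/H$, rationality of $G/H$ yields its local flattenability.

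There is essentially no obstacle here: the only thing to verify is the algebraicity and transitivity of the $G$-action on $G/H$, which is part of the standard construction of quotient varieties (cf.\ \cite{Bo1991}). Thus the corollary is a one-line consequence of the preceding theorem together with the existence of a transitive algebraic $G$-action on $G/H$.
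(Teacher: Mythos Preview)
Your proposal is correct and is exactly the intended derivation: the paper states this as an immediate corollary of the preceding theorem (with no separate proof), and the only thing to observe is that $G$ acts transitively on $G/H$ by left translations, so $\mathrm{Aut}(G/H)$ acts transitively and the theorem applies.
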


\begin{remark} Maintain the notation of Corollary \ref{hs}.
There are nonrational (and even not stably rational) varieties
$G/H$, where  $G$ is
\begin{equation*}
{\mathrm {SL}}_{n_1}\times\cdots
\times {\mathrm {SL}}_{n_r}\times {\rm Sp}_{2m_1}\times
\cdots \times {\rm Sp}_{2m_s}
\end{equation*}
 and $H$ is finite; see
\cite[Thm. 2]{Po2013_2}. It is an old problem,  still open at this
writing (October 2019), whether there are nonrational homogeneous
spaces $G/H$ with  connected $H$. For connected $H$ of various
special types, rationality of $G/H$ is known; see \cite{CZ2017}
and Remark \ref{ratio} below. In particular, $G$ is rational as a variety
\cite{Ch1954} (cf. \cite[Lem. 2]{Po2013_2}).
\end{remark}

\subsubsection*{\it {\rm 2.} Vector Bundles and Homogeneous Fiber Spaces}\

\begin{definition}\label{deff2}
{\rm Given three varieties $X$, $Z$, $F$, we say that a surjective morphism $\varphi\colon X\to Z$ is a {\it locally trivial fibration over $Z$ with fiber} $F$ if for every point $z\in Z$, there are a neighbourhood $U$ of $z$ in $Z$ (called {\it trivializing neighborhood}) and an isomorphism $\tau_U\colon \varphi^{-1}(U)\to U\times F$ over $U$. If $F$ has a structure of an algebraic vector space over $k$ and, for any pair of trivializing neighborhoods $U$ and $V$, the automorphism $\tau_V\circ\tau_U^{-1}\colon (U\cap V)\times F\to (U\cap V)\times F$ over $U\cap V$
is linear over every point of $U\cap V$,
then we say that $\varphi$ is {\it a vector bundle over $Z$ with fiber $F$}.}
\end{definition}

The claim of Theorem 2 below for vector bundles is mentioned in \cite[Ex. 2.1]{BB2014}:

\begin{theorem}\label{thm4n}
Let $X\to Z$ be a locally trivial fibration over an irreducible variety $Z$ with fiber
${\mathbf A}\!^n$. If $Z$ is locally flattenable,
then $X$  is  locally flattenable as well.
\end{theorem}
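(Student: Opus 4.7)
The plan is to unpack both hypotheses pointwise at an arbitrary $x\in X$ and glue them together in an obvious way. Set $z=\varphi(x)$. Because $\varphi$ is locally trivial, there is a trivializing neighborhood $U$ of $z$ with an isomorphism $\tau_U\colon\varphi^{-1}(U)\xrightarrow{\sim} U\times\mathbf{A}^n$ over $U$. Because $Z$ is locally flattenable, there is a flattenable open subset $W_0$ of $Z$ containing $z$, say with an open embedding $W_0\hookrightarrow\mathbf{A}^m$.

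Next I would shrink: put $W:=W_0\cap U$. This is an open neighborhood of $z$ in $Z$; as an open subset of the flattenable variety $W_0$, it is itself flattenable (an open subset of an open subset of $\mathbf{A}^m$ is an open subset of $\mathbf{A}^m$); and it is still a trivializing neighborhood for $\varphi$ (restrict $\tau_U$). Thus
\begin{equation*}
\varphi^{-1}(W)\;\cong\;W\times\mathbf{A}^n
\end{equation*}
is an open neighborhood of $x$ in $X$.

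Finally, I would use the open embedding $W\hookrightarrow\mathbf{A}^m$ to produce an open embedding
\begin{equation*}
W\times\mathbf{A}^n\;\hookrightarrow\;\mathbf{A}^m\times\mathbf{A}^n\;=\;\mathbf{A}^{m+n},
\end{equation*}
showing that $\varphi^{-1}(W)$ is a flattenable open neighborhood of $x$. Since $x\in X$ was arbitrary, $X$ is locally flattenable.

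There is essentially no obstacle here: the only point that warrants a word is that local flattenability is inherited by open subsets, which is immediate from Definition \ref{def1}(a). The argument uses neither smoothness nor irreducibility of the fiber beyond what is given, only the fact that the product of two open subsets of affine spaces is an open subset of an affine space.
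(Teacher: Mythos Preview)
Your argument is correct and is exactly the unpacking of the paper's own proof, which simply reads ``This follows from Definitions \ref{def1} and \ref{deff2}.'' You have spelled out precisely what that sentence means.
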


\begin{proof}
This follows from  Definitions \ref{def1} and \ref{deff2}.
\quad $\square$
\end{proof}

We recall (see \cite[3.2]{Se1958}, \cite[4.8]{PV1994}) a construction used several times below.
Let $G$ be a connected algebraic group, $H$ its closed subgroup, and
$F$ a variety endowed with an
action of $H$. Then $H$ acts on $G\times F$ by the formula
\begin{equation*}
h\cdot (g, f)\mapsto (gh^{-1}, h\cdot f).
\end{equation*}
By \cite[Prop. 4]{Se1958}, a mild restriction on $F$ ensures the existence of the quotient of this action (in the sense of \cite[6.3]{Bo1961}): namely, it exists if every finite subset of $F$ lies in affine open subset of $F$ (for instance, any quasiprojective $F$ shares this property).
The corresponding quotient variety is denoted by $G\times^HF$. The natural projection $G\times F\to G$ is $H$-equivariant and therefore induces the surjective morphism  of the quotients $\pi_{G, H, F}^{\ }\colon G\times^HF\to G/H$; its fibers are isomorphic to $F$. The action of $G$ on $G\times F$ by left multiplications on the first factor
commutes with the action of $H$ and therefore descends to the action of $G$ on $G\times^HF$; the morphism $\pi_{G, H, F}^{\ }$ is equivariant with respect to this action and the natural action of $G$ on $G/H$. Given the aforesaid,
$G\times^HF$ is called the {\it $($algebraic$)$ homogeneous fiber space over $G/H$ with fiber $F$}. In general, $\pi_{G, H, F}^{\ }$ is not a locally trivial fibration over $G/H$ with fiber $F$ (in the sense of  Definition \ref{deff2}). However, if $F$ is a vector space $V$ over $k$ and the action of $H$ on $V$ is linear, then \cite[Thm.\;2]{Se1958} implies that $\pi_{G, H, F}^{\ }$ is a vector bundle over
$G/H$ with fiber $V$ (in the sense of  Definition \ref{deff2}).







Combining Corollary \ref{hs} and Theorem \ref{thm4n} yields

\begin{corollary}\label{cor5}
Maintain the above notation. If $ G/H$ is rational, then
$G\times^HV$ is locally flattenable.
\end{corollary}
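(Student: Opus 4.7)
The plan is to simply assemble the two ingredients already in place: Corollary \ref{hs} converts the rationality hypothesis on $G/H$ into the geometric statement we actually need, and Theorem \ref{thm4n} then propagates local flattenability from the base to the total space of a vector bundle.

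More precisely, first I would invoke Corollary \ref{hs} to conclude from the rationality of $G/H$ that $G/H$ is locally flattenable. Since $G/H$ is the quotient of the connected algebraic group $G$ by a closed subgroup, it is in particular an irreducible variety, so the hypothesis of Theorem \ref{thm4n} on the base is satisfied.

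Next I would observe that, by the discussion immediately preceding the corollary (referencing \cite[Thm.\;2]{Se1958}), because the action of $H$ on the vector space $V$ is linear, the projection $\pi_{G,H,V}^{\ }\colon G\times^H V\to G/H$ is a vector bundle over $G/H$ with fiber $V$ in the sense of Definition \ref{deff2}. In particular, it is a locally trivial fibration with fiber $V\cong \mathbf{A}^n$, where $n=\dim_k V$.

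Finally I would apply Theorem \ref{thm4n} to the locally trivial fibration $\pi_{G,H,V}^{\ }\colon G\times^H V\to G/H$ over the locally flattenable irreducible base $G/H$, and conclude that $G\times^H V$ is locally flattenable. There is no real obstacle here; the only thing to verify is that the construction $G\times^H V$ exists as a variety, which is guaranteed by the cited result \cite[Prop.\;4]{Se1958} since the vector space $V$ is (quasi-)affine and so every finite subset of $V$ is contained in an affine open subset.
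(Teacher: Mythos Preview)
Your proof is correct and follows exactly the approach the paper indicates: the paper simply states that the corollary is obtained by combining Corollary~\ref{hs} and Theorem~\ref{thm4n}, and you have spelled out precisely that combination, together with the observation (from the paragraph preceding the corollary) that $\pi_{G,H,V}$ is a vector bundle in the sense of Definition~\ref{deff2}.
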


\begin{theorem}\label{thm7} Let $G$ be a connected reductive
algebraic group and let $X$ be a
smooth affine variety endowed with an action of $G$. Assume that
\begin{enumerate}[\hskip 4.2mm\rm(a)]
\item $k[X]^G=k$;
\item the {\rm(}unique{\rm, see, e.g., {\rm \cite[Cor. of Thm. 4.7]{PV1994})}}
closed $G$-orbit $\mathcal O$ in $X$ is rational.
\end{enumerate}
Then $X$ is locally flattenable.
\end{theorem}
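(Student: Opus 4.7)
My approach is to apply Luna's étale slice theorem at a point of $\mathcal O$ and reduce Theorem \ref{thm7} to Corollary \ref{cor5}. Fix $x\in\mathcal O$; since $\mathcal O$ is closed in the affine variety $X$ and $G$ is reductive, Matsushima's theorem yields that $G_x$ is reductive. Luna's slice theorem (applicable because $X$ is smooth, $G$ is reductive, and $G\cdot x$ is closed) then produces a $G_x$-stable locally closed smooth affine subvariety $S\subset X$ containing $x$, a $G_x$-equivariant étale morphism $\Psi\colon S\to N_x:=T_xX/T_x\mathcal O$ with $\Psi(x)=0$, and a $G$-equivariant étale morphism $\Phi\colon G\times^{G_x}S\to X$ with $G$-saturated image $U$; in its strong (Cartesian) form, the square of categorical quotients
\[
\begin{CD}
G\times^{G_x}S @>\Phi>> U \\
@VVV @VVV \\
S/\!/G_x @>>> U/\!/G
\end{CD}
\]
is Cartesian with étale bottom row.

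Hypothesis (a) forces $X/\!/G = \mathrm{Spec}\,k$ to be a single reduced point, so $X$ admits no proper nonempty $G$-saturated open subset, whence $U = X$. The induced étale morphism $S/\!/G_x \to X/\!/G$ then exhibits $S/\!/G_x$ as a finite disjoint union of reduced points; after replacing $S$ by the component containing $x$, $S/\!/G_x$ is a single point, so $k[S]^{G_x}=k$ and $\{x\}$ is the unique closed $G_x$-orbit of $S$. Cartesianness now gives a $G$-equivariant isomorphism $G\times^{G_x}S \cong X$.

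By hypothesis (b), $\mathcal O = G/G_x$ is rational, so Corollary \ref{cor5} gives that the homogeneous vector bundle $G\times^{G_x}N_x$ over $\mathcal O$ is locally flattenable. To transfer this to $X$, I will show that $\Psi\colon S\to N_x$ is in fact an open immersion: its image $\Psi(S)\subset N_x$ is a $G_x$-invariant open subset containing $0$; the fiber $\Psi^{-1}(0)$ is a finite $G_x$-stable subset, and each of its elements generates a $G_x$-orbit whose closure contains the unique closed $G_x$-orbit $\{x\}$ of $S$, whence $\Psi^{-1}(0)=\{x\}$; the local constancy of fiber degree for an étale morphism then forces the fiber degree to be identically $1$ on the connected image $\Psi(S)$, so $\Psi$ is bijective étale onto $\Psi(S)$, hence an open immersion. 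This identifies $X\cong G\times^{G_x}S$ with the $G$-invariant open subset $G\times^{G_x}\Psi(S)$ of the locally flattenable variety $G\times^{G_x}N_x$, and local flattenability passes to open subvarieties, so $X$ is locally flattenable.

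The main obstacle is the final step, namely upgrading $\Psi$ from étale to an open immersion: this exploits the strong (Cartesian) form of the slice theorem to reduce to the one-point-fiber analysis of $\Psi^{-1}(0)$, which in turn crucially uses hypothesis (a) to pin down the unique closed $G_x$-orbit of $S$ as $\{x\}$.
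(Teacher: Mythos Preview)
Your overall strategy---Luna at a point of $\mathcal O$, then Corollary~\ref{cor5}---is the paper's. The paper, however, cites directly the corollary of Luna's theorem \cite[p.~98, Cor.~2]{Lu1973} (cf.\ \cite[Thm.~6.7]{PV1994}) asserting that (a) together with smoothness already yields a $G$-equivariant isomorphism $X\cong G\times^{G_x}V$ with $V$ a \emph{linear} $G_x$-module, so that Corollary~\ref{cor5} applies immediately. You are essentially re-deriving that corollary, and your argument is correct up through $X\cong G\times^{G_x}S$, $k[S]^{G_x}=k$, and $\Psi^{-1}(0)=\{x\}$.

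The gap is the last step: fiber cardinality is \emph{not} locally constant for an \'etale morphism unless the morphism is finite. For instance, if $\pi\colon\widetilde C\to C$ is a connected \'etale double cover of a smooth curve and $\tilde p\in\widetilde C$, then $\widetilde C\setminus\{\tilde p\}\to C$ is \'etale and surjective with irreducible source and connected image, yet the fiber over $\pi(\tilde p)$ has one point while all others have two. So $\Psi^{-1}(0)=\{x\}$ alone does not force $\Psi$ to be injective. The remedy is to use for $\Psi$ the same strongly \'etale (Cartesian) form of Luna's fundamental lemma that you already invoked for $\Phi$: one obtains a Cartesian square over $S/\!/G_x\to N_x/\!/G_x$, and since $S/\!/G_x$ is a single point mapping to the class of $0$, this identifies $S$ with the null-cone fibre of $N_x\to N_x/\!/G_x$. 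That fibre is then smooth, irreducible, and of dimension $\dim S=\dim N_x$; but a smooth irreducible cone in an affine space is a linear subspace, so the fibre equals $N_x$. Hence $S\cong N_x$ and $X\cong G\times^{G_x}N_x$, which is precisely what the paper's one-line citation of Luna delivers.
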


\begin{proof}
By \cite[p. 98, Cor. 2]{Lu1973} (see also \cite[Thm. 6.7]{PV1994}),
(a) and smoothness of $X$ imply that $X$ is  $G$-equivariantly isomorphic
to $G\times^HV$, where $H$ is the $G$-stabilizer of a point of $\mathcal O$,
and $V$ is a finite-dimensional $H$-module. The claim then follows from Corollary \ref{cor5}.
\quad $\square$ \end{proof}

\subsubsection*{\it {\rm 3}. Spherical Varieties}\

Let $G$ be a connected reductive algebraic group and let $B$ be a
Borel subgroup of $G$. Recall that a variety $X$ endowed with an
action of $G$ is called {\it spherical variety of $G$} if there is a
dense open $B$-orbit in $X$.

\begin{theorem}\label{sph} Every smooth spherical variety is locally
flattebable.
\end{theorem}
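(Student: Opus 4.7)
The plan is to invoke the local structure theorem for spherical varieties (Brion--Luna--Vust) to cover $X$ by open subsets that are explicitly open subvarieties of affine spaces. Fix a Borel subgroup $B\subset G$ and let $x\in X$ be an arbitrary point. After passing (via Sumihiro's theorem) to a $G$-stable quasi-projective neighborhood of the orbit $G\cdot x$, and then to a simple open $G$-subvariety containing $x$, the local structure theorem will produce a parabolic subgroup $P\supset B$ with Levi decomposition $P=L\cdot P_u$, a $B$-stable affine open neighborhood $X_0\ni x$ in $X$, and a closed $L$-stable smooth subvariety $Z\subset X_0$ on which the derived subgroup $[L,L]$ acts trivially, such that the multiplication map $P_u\times Z\to X_0$ is an isomorphism of varieties.

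The next step is to identify $Z$ as a smooth affine toric variety: since $[L,L]$ acts trivially on $Z$, the $L$-action factors through the torus $T:=L/[L,L]$, and $Z$ inherits a dense $T$-orbit from the dense $B$-orbit of $X_0$ (as $B$ surjects onto $T$). By the classification of smooth affine toric varieties, $Z\cong {\mathbf A}^k\times({\mathbf A}^1\setminus\{0\})^\ell$ for some nonnegative integers $k,\ell$, so $Z$ is an open subvariety of ${\mathbf A}^{k+\ell}$. Combined with the fact that every connected unipotent group in characteristic zero is isomorphic as a variety to an affine space, say $P_u\cong{\mathbf A}^N$, this yields
\begin{equation*}
X_0\;\cong\;P_u\times Z\;\cong\;{\mathbf A}^N\times\bigl({\mathbf A}^k\times({\mathbf A}^1\setminus\{0\})^\ell\bigr),
\end{equation*}
exhibiting $X_0$ as an open subvariety of ${\mathbf A}^{N+k+\ell}$, hence flattenable.

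Since $x\in X$ was arbitrary, the above furnishes a flattenable open neighborhood of every point of $X$, establishing local flattenability. The hardest part will be justifying the applicability of the local structure theorem around an arbitrary point of $X$, rather than only at a point of the unique closed orbit of a simple spherical embedding; this reduces to standard considerations once one passes to a simple open $G$-subvariety containing $x$ (for instance, by removing from $X$ all $G$-orbits that do not meet a fixed affine chart through $x$). The remaining steps are direct consequences of the classification of smooth affine toric varieties and of the well-known triviality, as a variety, of any connected unipotent algebraic group in characteristic zero.
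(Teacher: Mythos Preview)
Your proof is correct and follows the same overall strategy as the paper---invoke the local structure theorem to cover $X$ by charts of the form $P_u\times Z$, note that $P_u$ is an affine space, and then analyze the slice $Z$---but the way you handle $Z$ is genuinely different.

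The paper only uses the coarse form of the local structure theorem (Timashev, Thm.\ 15.17), which produces $Z$ as a smooth affine \emph{$L$-spherical} variety; it then feeds $Z$ into the previously established Theorem~\ref{thm7} (Luna's \'etale slice theorem plus rationality of the closed orbit) to conclude that $Z$ is locally flattenable. You instead invoke the refined local structure theorem for spherical varieties in which the parabolic is the adapted one and $[L,L]$ acts trivially on $Z$, so that $Z$ is outright a smooth affine \emph{toric} variety; the elementary classification then gives $Z\cong{\mathbf A}^k\times({\mathbf A}^1\setminus\{0\})^\ell$, hence flattenable. Your route is more self-contained---it bypasses Luna's slice theorem and Corollary~\ref{cor5} entirely---at the cost of citing a sharper (though standard) form of the local structure theorem. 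It also yields the slightly stronger conclusion that each chart $X_0$ is itself flattenable, not merely locally flattenable.

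One point worth making explicit in your write-up: the local structure chart $X_0$ a priori only contains the closed $G$-orbit of the simple subvariety, not necessarily your chosen point $x$. The missing line is that the $G$-translates $g\cdot X_0$ cover the whole simple subvariety (their union is $G$-stable open and meets the unique closed orbit, so its complement is empty); hence some translate contains $x$, and translates of $X_0$ are of course still flattenable. You gesture at this under ``standard considerations,'' but spelling it out would remove any ambiguity.
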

\begin{proof} Let $X$ be a smooth spherical variety of a connected
reductive group $G$.

First, $X$ is rational because every $B$-orbit is rational (the latter
is isomor\-phic to
the complement of a union of several coordinate hyperplanes in some
affine space \cite[Cop. p.5-02]{Gr1958}).

 Secondly, every $G$-orbit in $X$ is spherical (see, e.g.,
 \cite[Prop. 15.14]{Ti2011}),
 hence  rational. Therefore, by Theorem \ref{thm7}, if $X$ is
 affine, then $X$ is locally flattenable.

 Thirdly, arbitrary $X$ is covered by open subsets, each of which is isomor\-phic
 to a variety of the form $P\times^LZ$, where $P$ and $L$ are respectively
 a parabolic subgroups of $G$ and  a Levi subgroup of $P$, and $Z$ is
 an affine spherical variety of $L$; see, e.g., \cite[Thm. 15.17]{Ti2011}.
 Since $X$ is smooth, $Z$ is smooth as well. Therefore,
 as explained above, $Z$ is locally flattenable. The variety $P\times^LZ$
 is isomorphic to the product of $Z$ and
 the underlying variety of
 the unipotent radical of $P$. Since this underlying variety is isomorphic
 to an affine space
 \cite[Cor.\,p.\,5-02]{Gr1958}, we infer that $P\times^LZ$ is locally flattenable.
 Therefore, $X$ is locally flattenable, too.
 \quad $\square$ \end{proof}

Since every toric variety is spherical, Theorem \ref{sph} implies

\begin{corollary}[{{\rm
\cite[Expl. 2.2]{BB2014}}}]\label{toric} Every smooth toric
variety is locally flattenable.
\end{corollary}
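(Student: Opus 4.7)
The plan is essentially to observe that every toric variety is spherical in the sense of the preceding subsection, and then invoke Theorem \ref{sph} verbatim.

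First I would recall the definition: a toric variety is a normal irreducible variety $X$ endowed with a faithful action of an algebraic torus $T$ possessing a dense open orbit. Since $T$ is a connected reductive algebraic group whose Borel subgroup is $T$ itself, the existence of a dense open $T$-orbit in $X$ is precisely the condition that $X$ be a spherical $T$-variety, with $B=T$. Thus every toric variety $X$ is spherical for the natural action of $T$, viewed as a connected reductive group.

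Second, since $X$ is assumed smooth, Theorem \ref{sph} applies directly and gives that $X$ is locally flattenable. That is the entire argument.

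There is no genuine obstacle here; the only point worth double-checking is that the definition of toric variety being used (open dense $T$-orbit) matches the hypotheses of sphericity for $B=T$, which it does tautologically, and that smoothness is preserved (it is, by assumption). So the corollary is a one-line consequence of Theorem \ref{sph}, and in the write-up I would state precisely this reduction and then cite Theorem \ref{sph}.
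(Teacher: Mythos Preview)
Your proposal is correct and matches the paper's approach exactly: the paper simply notes that every toric variety is spherical and then invokes Theorem~\ref{sph}. Your additional remark that $T$ is its own Borel subgroup, so a dense $T$-orbit is tautologically a dense $B$-orbit, just makes explicit the one-line reduction the paper leaves implicit.
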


\subsubsection*{\it {\rm 4}. Blow-ups with Nonsingular Centers}

\begin{theorem}[{{\rm
\cite[p. 885, Prop.]{Gr1989}, \cite[Thm. 4.4]{BHSV2008},
\cite[Prop. 2.6]{BB2014}}}]
\label{blow} The blow-up of a locally flattenable variety along
a smooth subvariety is locally flattenable.
\end{theorem}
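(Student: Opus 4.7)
The plan is to verify local flattenability Zariski-locally on $\widetilde{X} := {\rm Bl}_{Y} X$. Let $\pi\colon \widetilde{X}\to X$ denote the blow-up projection and $E := \pi^{-1}(Y)$ the exceptional divisor. Since $\pi$ restricts to an isomorphism $\widetilde{X}\setminus E \xrightarrow{\sim} X\setminus Y$, any point $p \in \widetilde{X}\setminus E$ inherits a flattenable neighborhood from $X$, and I am reduced to the case $p \in E$ lying over some $y \in Y$. Replacing $X$ by a flattenable open neighborhood of $y$ (and using that blow-up commutes with open immersion in the base, so $\pi^{-1}(V) = {\rm Bl}_{Y\cap V}V$), I may assume $X$ is open in ${\mathbf A}^n$ and that $Y\subset X$ is smooth closed of codimension $c$.

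After further Zariski-shrinking $X$ around $y$, I choose regular functions $f_1,\ldots, f_c \in k[X]$ generating the ideal of $Y$ with $df_1,\ldots, df_c$ linearly independent at every point of $X$; equivalently, $\phi := (f_1,\ldots, f_c)\colon X \to {\mathbf A}^c$ is smooth with $Y = \phi^{-1}(0)$. Flatness of $\phi$ ensures that blow-up commutes with the base change along $\phi$, giving
$$
 {\rm Bl}_{Y} X \;\cong\; X \times_{{\mathbf A}^c} {\rm Bl}_{0}\,{\mathbf A}^c.
$$
The standard cover of ${\rm Bl}_{0}\,{\mathbf A}^c$ by $c$ affine charts $U_i\cong {\mathbf A}^c$ pulls back to an open cover of $\widetilde{X}$ by
$$
 \widetilde{X}_i := X\times_{{\mathbf A}^c} U_i \;\cong\; \bigl\{(x, t)\in X\times {\mathbf A}^{c-1} : f_j(x) = t_j\, f_i(x)\ \text{for all}\ j\neq i\bigr\}.
$$
On the open subset of $\widetilde{X}_i$ where $f_i \neq 0$, the defining relations solve for $t_j = f_j/f_i$ and identify the locus with an open subset of $X$, which is flattenable.

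It remains to establish flattenability of $\widetilde{X}_i$ at a point $p$ with $f_i(p) = 0$, that is, a point of the exceptional divisor over some $y\in Y$; this is the main obstacle. The idea is to exploit the remaining freedom both in the choice of flattening embedding $X\hookrightarrow {\mathbf A}^n$ and in the choice of the generators $f_1,\ldots, f_c$, together with the smoothness of $Y$, in order to produce a Zariski-open neighborhood of $p$ in $\widetilde{X}_i$ that is isomorphic to an open subset of ${\mathbf A}^n$. Informally, one seeks to arrange (after shrinking and a suitable change of coordinates on the flattening) that $f_1,\ldots, f_c$ appear as coordinate functions of a new flattening of a neighborhood of $y$, so that $Y$ locally becomes a coordinate subspace; the blow-up of ${\mathbf A}^n$ along a coordinate subspace is isomorphic to ${\rm Bl}_{0}\,{\mathbf A}^c \times {\mathbf A}^{n-c}$ and is covered by affine charts each isomorphic to ${\mathbf A}^n$, which would conclude the argument. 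The careful execution of this linearization step is the technical heart of the theorem, carried out in \cite{Gr1989, BHSV2008, BB2014}.
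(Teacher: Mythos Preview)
The paper gives no proof of this theorem; it is stated with attribution to \cite{Gr1989}, \cite{BHSV2008}, and \cite{BB2014} and is immediately followed by the next subsection. There is therefore nothing in the paper to compare your argument against.

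As for the sketch itself, your reductions (away from the exceptional divisor; to $X$ open in ${\mathbf A}^{n}$; to a single affine chart $\widetilde{X}_i$ via the base-change description ${\rm Bl}_Y X \cong X\times_{{\mathbf A}^c}{\rm Bl}_0\,{\mathbf A}^c$) are correct and match the opening moves in the cited references. However, you explicitly stop at the substantive step---producing an open immersion of a neighborhood of $p$ in $\widetilde{X}_i$ into ${\mathbf A}^{n}$---and defer back to those same references. So your proposal is an outline rather than a proof, and in that respect it simply mirrors the paper's own decision to cite rather than argue.
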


\subsubsection*{\it {\rm 5}. Curves and Surfaces}\

For varieties of dimension $\leqslant 2$, the answer to
\eqref{Gr} is affirmative:

\begin{theorem}[{{\rm \cite[Prop. 3.2]{BHSV2008},
\cite[Prop. 2.6]{BB2014}}}] Every irredu\-cible rational
smooth algebraic curve
or surface $X$ is locally flattenable.
\end{theorem}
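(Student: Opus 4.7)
The plan is to treat curves and surfaces separately, in each case reducing to a classification statement and invoking the preceding results of this section. For a smooth irreducible rational algebraic curve $X$, one knows that $X$ is isomorphic to an open subset of $\mathbf{P}^1$; since $\mathbf{P}^1$ is a smooth toric variety (covered by two affine charts each isomorphic to $\mathbf{A}^1$), it is locally flattenable by Corollary \ref{toric}, and local flattenability is trivially inherited by open subvarieties from Definition \ref{def1}. So the curve case is immediate.

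For a smooth rational surface $X$, the plan is to realize $X$ as an open subvariety of a smooth projective rational surface and then run minimal model theory. First I would choose a projective compactification $\overline{X}$ (available by Nagata's theorem, using that smooth surfaces are quasi-projective) and apply resolution of singularities for surfaces to obtain a smooth projective rational surface $\widetilde{X}$ containing $X$ as an open subvariety; the resolution can be arranged to modify only $\overline{X}\setminus X$, since $X$ is already smooth. Because local flattenability passes to open subsets, it suffices to prove the statement for $\widetilde{X}$.

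Now I would invoke the classification of smooth projective rational surfaces: every such $\widetilde{X}$ is obtained from a minimal rational surface --- either $\mathbf{P}^2$ or a Hirzebruch surface $\mathbf{F}_n$ --- by a finite sequence of blow-ups centered at smooth points. Each minimal model is a smooth toric surface, hence locally flattenable by Corollary \ref{toric}; iterating Theorem \ref{blow} along the sequence of blow-ups propagates local flattenability up to $\widetilde{X}$, and therefore to $X$. The only step that goes beyond formal manipulation of the earlier results is this reduction to the smooth projective case --- relying on Nagata's compactification, surface resolution, and the minimal model theory of rational surfaces --- so I expect that to be the main technical hurdle; once these ingredients are in hand, the conclusion is a direct combination of Corollary \ref{toric} with repeated application of Theorem \ref{blow}.
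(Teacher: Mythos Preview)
Your proposal is correct and follows essentially the same approach as the paper: embed $X$ openly in $\mathbf{P}^1$ (curve case) or in a smooth projective rational surface obtained by blowing up points on $\mathbf{P}^2$ or a Hirzebruch surface $F_n$ (surface case), then combine Corollary~\ref{toric} with Theorem~\ref{blow}. The paper's proof is terser, simply asserting the existence of the smooth projective compactification and the reduction to minimal models, whereas you spell out Nagata's theorem and resolution of singularities; but the strategy is identical.
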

\begin{proof} If $X$ is a curve, it admits an open embedding
in $\mathbf P^1$. If $X$ is a surface, it
admits an open embedding in a projective smooth  surface,
which, being rational,  is obtained by repeated point blow-ups of
a minimal model, i.e., either ${\mathbf P}^2$ or a Hirzebruch surface
$F_n$, $n\neq 1$.
Since ${\mathbf P}^1$, ${\mathbf P}^2$, and $F_n$
are toric varieties, the claim follows from Corollary \ref{toric} and
Theorem \ref{blow}.
\quad $\square$ \end{proof}

\subsection*{5. Local Version of (ZCP)} Given Definition \ref{def1},
the local version of  the Zariski Cancellation Prob\-lem men\-tion\-ed
in Subsection 2 is formulated as fol\-lows:
\begin{equation}\label{LZCP}
\begin{split}
&\mbox{\it Are there affine varieties $X$ and $Y$ such that
$Y$ and } \\[-1.5mm]
&\mbox{\it $X\times Y$ are flattenable, but
$X$ is not flattenable?
}
\end{split}
\tag{LZCP}
\end{equation}

\noindent In Subsection 7 we show that the answer to
\eqref{LZCP} is affirmative.

\subsection*{6. Flattenable Varieties vs. Locally Flattenable Varieties}
Flat\-te\-nab\-le varieties have special properties:
\begin{lemma}\label{thm12}
Let $X$ be an affine flattenable variety and let $\varphi\colon
X\hookrightarrow {\mathbf  A}^{\hskip -.4mm n}$ be an open
embedding.\;If $k[X]^\ast=k^\ast$, then $\varphi(X)=
{\mathbf  A}^{\hskip -.4mm n}$.
\end{lemma}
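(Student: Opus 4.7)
The plan is to analyze the closed complement $Z := \mathbf{A}^n \setminus \varphi(X)$ and force it to be empty. I would first show that $Z$ has pure codimension one in $\mathbf{A}^n$ (or is empty), and then rule out any codimension-one components using the hypothesis $k[X]^\ast = k^\ast$.

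For the codimension step, let $Z_1, \ldots, Z_r$ be the codimension-one irreducible components of $Z$. Since $k[x_1, \ldots, x_n]$ is a unique factorization domain, I would write $Z_i = V(f_i)$ for pairwise non-associate irreducible polynomials $f_i$, set $f := f_1 \cdots f_r$, and consider the principal affine open $V := \mathbf{A}^n \setminus V(f)$ (which is all of $\mathbf{A}^n$ if $r = 0$). By construction $\varphi(X) \subseteq V$, and the closed complement $V \setminus \varphi(X)$ consists of the remaining higher-codimension components of $Z$ meeting $V$; in particular it has codimension $\geq 2$ in the smooth variety $V$. Algebraic Hartogs' theorem on the normal variety $V$ then yields $k[\varphi(X)] = k[V]$, and since $\varphi(X)$ and $V$ are both affine I conclude $\varphi(X) = V = \mathbf{A}^n \setminus V(f)$.

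For the unit step, $k[X] \cong k[V] = k[x_1, \ldots, x_n]_f$. Since $k[x_1, \ldots, x_n]$ is a UFD and $f$ is squarefree, the group of units of this localization consists precisely of the elements $c \cdot f_1^{a_1} \cdots f_r^{a_r}$ with $c \in k^\ast$ and $a_i \in \mathbb{Z}$; in particular, $f_1$ itself is a nonconstant unit whenever $r \geq 1$. The hypothesis $k[X]^\ast = k^\ast$ therefore forces $r = 0$, so $Z = \emptyset$ and $\varphi(X) = \mathbf{A}^n$.

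The main obstacle is the codimension-one reduction, which rests on the normality of $\mathbf{A}^n$ together with the affineness of $\varphi(X)$; the subsequent unit computation in a localized UFD is entirely standard.
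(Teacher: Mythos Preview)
Your proof is correct and follows essentially the same route as the paper's: show that the complement $Z=\mathbf{A}^n\setminus\varphi(X)$ is a hypersurface $V(f)$, and then observe that $f$ pulls back to a nonconstant unit on $X$, contradicting the hypothesis unless $Z=\varnothing$.

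The only difference is in how the pure-codimension-one step is handled. The paper invokes directly the standard fact that the complement of an affine open subset of $\mathbf{A}^n$ has every irreducible component of dimension $n-1$, and then uses $\mathrm{Pic}(\mathbf{A}^n)=0$ to write $Z=V(f)$. You instead strip off the codimension-one components first (via the UFD property, which is the same input as $\mathrm{Pic}=0$), and then use algebraic Hartogs on the smooth affine $V$ together with affineness of $\varphi(X)$ to conclude $\varphi(X)=V$; this amounts to re-deriving that standard fact in situ. Both arguments are short and rest on the same two ingredients (affineness of $X$ and factoriality/normality of $\mathbf{A}^n$), so there is no substantive divergence.
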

\begin{proof}
Assume that the closed set ${\mathbf  A}^{\hskip -.4mm n}\setminus \varphi(X)$
is nonempty.\;Then, since $X$ is affine, the dimension
of every irreducible component  of
this set is $n-1$.\;Therefore, ${\rm Pic}({\mathbf  A}^{\hskip -.4mm n})=0$
implies that ${\mathbf  A}^{\hskip -.4mm n}\setminus \varphi(X)$ is
the set of zeros of a certain function $f\in k[{\mathbf  A}^{\hskip -.4mm n}]$.\;Then
$f\circ\varphi$ is a nonconstant element of $k[X]^\ast$,
a contradiction.\;Hence
$\varphi(X) ={\mathbf  A}^{\hskip -.4mm n}$.
\quad $\square$ \end{proof}

\begin{lemma}\label{afsp} For a connected affine algebraic group $G$,
the following proper\-ties are equivalent:
\begin{enumerate}[\hskip 4.2mm \rm(a)]
\item as a variety, $G$ is isomorphic to an affine space;
\item as a group, $G$ is unipotent.
\end{enumerate}
\end{lemma}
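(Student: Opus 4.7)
The equivalence splits into two implications, which I would treat separately.

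For (b)$\Rightarrow$(a), I would invoke the standard structure theory of connected unipotent groups in characteristic zero: $G$ admits a central series $G=G_0\supset G_1\supset\cdots\supset G_n=\{e\}$ with successive quotients $G_i/G_{i+1}\cong\mathbb{G}_a$, and an induction on $n$ using the triviality of $\mathbb{G}_a$-torsors over affine bases (a consequence of the vanishing of higher quasi-coherent cohomology on affine varieties) successively produces isomorphisms $G_{n-i}\cong{\mathbf A}^i$, culminating in $G\cong{\mathbf A}^n$.

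For (a)$\Rightarrow$(b), I would begin with $k[G]\cong k[x_1,\dots,x_n]$, which forces $k[G]^\ast=k^\ast$, so by Rosenlicht's theorem the character group $X(G)\cong k[G]^\ast/k^\ast$ is trivial. Writing $G=L\ltimes{\mathcal R}_uG$ via the Mostow Levi decomposition recalled in the introduction, we have $G\cong L\times{\mathcal R}_uG$ as varieties; any character of $L$ extends to $G$ by making it trivial on ${\mathcal R}_uG$, so $X(L)=0$ as well, and therefore $L$ is semisimple. It remains to show $L=\{e\}$.

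The Levi inclusion $L\hookrightarrow G$ is a section of the quotient $G\to G/{\mathcal R}_uG=L$, so $L$ is a closed subvariety and retract of the variety $G\cong{\mathbf A}^n$. Reducing to $k=\mathbb{C}$ by the Lefschetz principle, $L(\mathbb{C})$ is a topological retract of the contractible space $\mathbb{C}^n$, hence contractible. But the Iwasawa decomposition shows that a connected complex reductive Lie group is homotopy equivalent to any of its maximal compact subgroups $K$, and a nontrivial compact connected Lie group is an orientable closed manifold with nonzero top cohomology, so cannot be contractible. Hence $L=\{e\}$ and $G={\mathcal R}_uG$ is unipotent.

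The main obstacle is this last step: ruling out nontrivial connected semisimple groups among retracts of an affine space. The route via compact real forms is the shortest I see; a purely algebraic alternative using \'etale cohomology (detecting the odd-degree exterior generators of $H^\ast(L,\mathbb{Q}_\ell)$) is available but heavier and would require developing more machinery than the lemma warrants.
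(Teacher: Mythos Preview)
Your proof is correct, but for (a)$\Rightarrow$(b) the paper takes a markedly shorter route. Rather than passing through the Levi decomposition, Rosenlicht's theorem, and a transcendental contractibility argument via maximal compact subgroups, the paper argues directly: if $G$ is not unipotent, it contains a nontrivial torus $T$; left multiplication by $T$ on $G\cong\mathbf{A}^n$ is then a fixed-point-free action of a torus on affine space, contradicting Bia{\l}ynicki-Birula's theorem \cite{Bi1966} that every algebraic torus action on $\mathbf{A}^n$ has a fixed point. This avoids the Lefschetz principle and compact real forms entirely and is essentially a one-line reduction to a known fixed-point result.

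Your approach is self-contained modulo standard Lie-theoretic facts, and the paper does deploy exactly your Iwasawa/maximal-compact machinery later, in the proof of Theorem~\ref{RLEF}. But for this lemma the torus-fixed-point argument is sharper: it isolates the obstruction (the presence of a nontrivial torus) immediately, without first showing that the Levi factor has no characters and then analysing its topology over $\mathbb{C}$. For (b)$\Rightarrow$(a) the paper simply cites Grothendieck \cite{Gr1958}; your inductive $\mathbb{G}_a$-torsor argument is one standard way to unpack that citation.
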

\begin{proof}
Assume that (a) holds. If $G$ is not unipotent, there exists a nontrivial
torus $T$ among the closed subgroups of $G$. The
action of $T$ on $G$ by left multiplica\-tion then gives a fixed point
free action of $T$ on an affine space, which is impossible by \cite[Thm. 1]{Bi1966}.
This contradiction
proves (a)$\Rightarrow$(b).

Conversely, (a) follows from (b) by \cite[Cor. p. 5-02]{Gr1958}.
\quad $\square$ \end{proof}

\begin{theorem}\label{thm13} Let $G$ be a connected affine
algebraic group,
and let ${\mathcal R}G$ be its radical.
\begin{enumerate}[\hskip 4.2mm\rm(a)]
\item If $G$ is solvable, then $G$ is flattenable.
\item If $G$ is flattenable and nonsolvable, then ${\mathcal R}G$ is
not unipotent.
\end{enumerate}
\end{theorem}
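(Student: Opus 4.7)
For part (a), the plan is to reduce to the two easy building blocks of a connected solvable $G$ in characteristic zero: by the Mostow-type structure theorem, $G$ admits a semidirect decomposition $G = T \ltimes \mathcal{R}_u G$ with $T$ a maximal torus. This yields an isomorphism of varieties $G \cong T \times \mathcal{R}_u G \cong (\mathbf{G}_m)^r \times \mathbf{A}^n$, using Lemma~\ref{afsp} for the second factor. Since $(\mathbf{G}_m)^r$ is the complement of the coordinate hyperplanes in $\mathbf{A}^r$, this exhibits $G$ as an open subvariety of $\mathbf{A}^{r+n}$, so $G$ is flattenable in the sense of Definition~\ref{def1}.

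For part (b), I would argue by contraposition: assuming $\mathcal{R}G$ is unipotent and $G$ is flattenable, I derive that $G$ must be solvable. The hypothesis gives $\mathcal{R}G = \mathcal{R}_u G$, so the Levi quotient $L := G/\mathcal{R}_u G$ is semisimple, and a Levi subgroup provides $G = L \ltimes \mathcal{R}_u G$. The crucial step is to verify that $k[G]^\ast = k^\ast$. For this I would invoke Rosenlicht's theorem, asserting that for any connected algebraic group $G'$ one has $k[G']^\ast / k^\ast \cong \mathrm{Hom}(G', \mathbf{G}_m)$, and then observe that every character of $G$ restricts trivially to $\mathcal{R}_u G$ (a unipotent group admits no nontrivial morphism to $\mathbf{G}_m$) and to the Levi subgroup $L$ (since $L = [L,L]$ as $L$ is semisimple), and hence is trivial on $G = L \cdot \mathcal{R}_u G$. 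Thus the character group of $G$ vanishes, yielding $k[G]^\ast = k^\ast$.

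With this in hand, the conclusion is packaged by chaining the two preceding lemmas. Since $G$ is affine and flattenable, any open embedding $\varphi\colon G \hookrightarrow \mathbf{A}^N$ is, by Lemma~\ref{thm12}, an isomorphism onto $\mathbf{A}^N$; then Lemma~\ref{afsp} forces $G$ to be unipotent, and in particular solvable, giving the desired contrapositive. The only substantive input beyond the lemmas of the excerpt and the standard Mostow/Levi structure theory is Rosenlicht's description of the units of $k[G']$, which is classical, so I do not anticipate any real obstacle; the vanishing of $\mathrm{Hom}(G, \mathbf{G}_m)$ reduces cleanly via the Levi decomposition to the elementary facts that unipotent and semisimple groups have no nontrivial characters, and the rest is bookkeeping.
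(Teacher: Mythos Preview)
Your proof is correct and follows essentially the same route as the paper's. For (a) you unpack explicitly the structure result that the paper simply cites from \cite{Gr1958}, and for (b) you run the same chain of implications (Lemma~\ref{thm12}, Lemma~\ref{afsp}, Rosenlicht) in contrapositive order, spelling out via the Levi decomposition the fact (cited in the paper as \cite[Lem.~1.1]{Po2011}) that a nontrivial character exists exactly when $\mathcal{R}G$ is not unipotent.
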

\begin{proof} Let $G$ be solvable. Then $G$, as a variety, is isomorphic
to the comp\-le\-ment of a union of several coordinate hyperplanes in some
affine space \cite[Cor. p. 5-02]{Gr1958}; whence (a).

Assume that $G$ is flattenable and nonsolvable. The latter implies that $G$
is not unipotent, hence, by Lemma \ref{afsp}, as a variety, $G$ is not
isomorphic to an affine space. Lemma \ref{thm12} then implies
that there is a nonconstant
invertible function  $f\in k[G]$. By \cite[Thm. 3]{Ro1961}, the map
$G\to {\mathrm {GL}}_1$, $g\mapsto f(g)/f(e)$, is then a nontrivial
character. According to \cite[Lem. 1.1]{Po2011}, the
existence of such a character is equivalent to the property that
${\mathcal R}G$ is not unipotent; whence (b).
\quad $\square$ \end{proof}

\begin{corollary}\label{fre} Let $G$ be a nontrivial connected reductive
algebraic group. If $G$ is flattenable, then the dimension of its center
is positive. In particular, every semisimple $G$ is not flattenable.
\end{corollary}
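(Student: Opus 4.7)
The plan is to reduce Corollary \ref{fre} to Theorem \ref{thm13}(b) via the structure theory of reductive groups. Recall that for a connected reductive group $G$, the radical $\mathcal{R}G$ is precisely the identity component $Z(G)^0$ of the center, and it is a torus. Consequently, $\dim Z(G) > 0$ if and only if $\mathcal{R}G$ is nontrivial, which (since $\mathcal{R}G$ is a torus) is equivalent to $\mathcal{R}G$ not being unipotent.

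With this in hand, I argue by contradiction. Suppose $G$ is a nontrivial connected reductive group that is flattenable, and assume $\dim Z(G) = 0$. Then $\mathcal{R}G = Z(G)^0 = \{e\}$, which is (trivially) unipotent. I split into two cases according to whether $G$ is solvable.

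If $G$ is nonsolvable, Theorem \ref{thm13}(b) applied to our flattenable nonsolvable $G$ states that $\mathcal{R}G$ cannot be unipotent, directly contradicting the previous paragraph. If instead $G$ is solvable, then being connected reductive and solvable forces $G = \mathcal{R}G = Z(G)^0$ to be a torus. But $\dim Z(G) = 0$ and connectedness then force $G$ to be trivial, contradicting the hypothesis that $G$ is nontrivial. Either way we reach a contradiction, so $\dim Z(G) > 0$.

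For the ``in particular'' statement, if $G$ is semisimple (hence connected reductive with finite center), then $\dim Z(G) = 0$, so by what has just been proved a nontrivial semisimple $G$ cannot be flattenable. There is essentially no technical obstacle here: the argument is a direct application of Theorem \ref{thm13}(b) once the identification $\mathcal{R}G = Z(G)^0$ for reductive $G$ is invoked; the only point requiring a small remark is to handle the solvable case separately, since Theorem \ref{thm13}(b) is silent there.
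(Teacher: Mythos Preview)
Your proof is correct and is essentially the argument the paper intends: the corollary is stated without proof, as an immediate consequence of Theorem~\ref{thm13}(b) together with the standard fact that for a connected reductive group the radical $\mathcal{R}G$ equals $Z(G)^0$ and is a torus. Your case split is harmless but superfluous, since a nontrivial connected reductive $G$ with $\dim Z(G)=0$ is automatically semisimple and hence nonsolvable, so only the nonsolvable case actually occurs.
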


\subsection*{7. Answering
(LZCP)}

\begin{theorem}\label{aLZCP} There are affine varieties $X$
and $Y$ such that
\begin{enumerate}[\hskip 4.2mm\rm(a)]
\item $X$ is not flattenable;
\item $Y$ and $X\times Y$ are flattenable.
\end{enumerate}
\end{theorem}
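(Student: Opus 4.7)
The plan is to exhibit explicit groups as $X$ and $Y$ and exploit the fact that $\mathrm{GL}_n$ factors as a variety (though not as a group) as $\mathrm{SL}_n \times \mathrm{GL}_1$. Concretely, I would take $X = \mathrm{SL}_n$ for some $n \geq 2$ and $Y = \mathrm{GL}_1$.

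For (a), $X = \mathrm{SL}_n$ is a connected semisimple affine algebraic group, so by Corollary \ref{fre} it is not flattenable. For $Y$, the group $\mathrm{GL}_1 = k^\ast$ is an open subset of $\mathbf{A}^{\hskip -.4mm 1}$, hence flattenable by Definition \ref{def1}(a); this also matches Theorem \ref{thm13}(a).

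The crux is the flattenability of the product. I would define a morphism
\begin{equation*}
\mu\colon \mathrm{SL}_n \times \mathrm{GL}_1 \longrightarrow \mathrm{GL}_n,\qquad (A,t)\longmapsto A\cdot \mathrm{diag}(t,1,\ldots,1),
\end{equation*}
and verify that it is an isomorphism of varieties by writing down its inverse $M\mapsto (M\cdot \mathrm{diag}(\det(M)^{-1},1,\ldots,1),\det(M))$. Then $X\times Y\cong \mathrm{GL}_n$, and $\mathrm{GL}_n$ is the principal open subset of $\mathrm{Mat}_{n\times n}\cong \mathbf{A}^{\hskip -.4mm n^2}$ defined by the non-vanishing of the determinant; thus $X\times Y$ is flattenable.

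I do not anticipate a real obstacle here; the main (minor) point to keep straight is that although $\mathrm{GL}_n$ is not isomorphic to $\mathrm{SL}_n\times \mathrm{GL}_1$ as an algebraic group, it is so as a variety via the map $\mu$ above, and this is exactly what (LZCP) asks about. The non-flattenability of $X$ is inherited entirely from Corollary \ref{fre}, so no independent argument is needed there.
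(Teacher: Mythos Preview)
Your proposal is correct and follows essentially the same route as the paper: both take $X=\mathrm{SL}_n$ (for $n\geqslant 2$) and $Y=\mathrm{GL}_1$, invoke Corollary~\ref{fre} for the non-flattenability of $X$, and exhibit the variety isomorphism $\mathrm{SL}_n\times\mathrm{GL}_1\to\mathrm{GL}_n$, $(A,t)\mapsto A\,\mathrm{diag}(t,1,\ldots,1)$ with the same explicit inverse to conclude that $X\times Y\cong\mathrm{GL}_n$ is open in $\mathrm{Mat}_{n\times n}$.
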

\begin{proof} As a variety, any ${\mathrm {SL}}_n$ for $n>1$ is
not flattenable by Corollary \ref{fre}. On the other hand, being open in
the affine space ${\rm Mat}_{m\times m}$, any ${\mathrm {GL}}_m$
is flattenable. The
morphism
\begin{align}\label{om}
{\mathrm {SL}}_n\times {\mathrm {GL}}_1\to {\mathrm {GL}}_n,\quad
(s, a)\mapsto s\,{\rm diag}(a,1,\ldots,1),
\end{align}
is an isomorphism of varieties:
its inverse is
\begin{align*}
{\mathrm {GL}}_n\to{\mathrm {SL}}_n\times {\mathrm {GL}}_1,\quad
g\mapsto \big(g\,{\rm diag}(1/\!\det(g),1,\ldots,1), \det(g)).
\end{align*}
Hence we can take $X={\mathrm {SL}}_n$ for $n>1$, and $Y={\mathrm {GL}}_1$.
\quad $\square$ \end{proof}

In Remark \ref{other} below one can find other examples.

\subsection*{8. Equivariantly Flattenable Varieties}

\begin{definition}\label{def2}
A variety $X$ endowed with an action of an algebraic group $G$
is called
{\it equivariantly} \lb respectively, {\it linearly equivariantly}{\rb}
{\it flattenable}  if there are
\begin{enumerate}[\hskip 4.2mm $\cdot$]
\item an action \lb respectively, a linear action\rb\;of $G$ on
some ${\mathbf A}^{\hskip -.4mm n}$;
    \item a $G$-equivariant open embedding $X\hookrightarrow
    {\mathbf A}^{\hskip -.4mm n}$.
\end{enumerate}
\end{definition}
\begin{definition}\label{def3}  An algebraic group $G$ is called
{\it equivariantly} {\lb}respectively, {\it linearly equivariantly}{\rb}
{\it flattenable} if $G$, as a variety endowed with the $G$-action by left
multiplication, is {\it equivariantly} {\lb}respectively, {\it linearly
equiva\-ri\-antly}{\rb} {\it flattenable}.
\end{definition}

\begin{example}\label{EEE}
Every ${\mathrm {GL}}_n$ is linearly equivariantly flattenable
since ${\mathrm {GL}}_n$ is an invariant open set of
${\rm Mat}_{n\times n}$ endowed with the ${\mathrm {GL}}_n$-action
by left multiplica\-tion.
\end{example}

\begin{example}
Every (connected) unipotent affine algebraic group $G$ is, as a variety,
isomorphic to an affine space; hence $G$ is equivariantly flattenable.
In fact, a more general statement,Theorem  \ref{LEF} below, holds.
It is easily seen that
$G$ is linearly equivariantly flattenable only if it is trivial. On the
other hand, the example of a Borel subgroup of ${\mathrm {SL}}_2$
naturally acting on $k^2$ shows that there are nontrivial solvable
linearly equivariantly flattenable groups.
\end{example}

\begin{example}
If the $G_1,\ldots, G_m$ are equivariantly (respectively,
linearly
equiva\-ri\-antly) flattenable groups, then, clearly,
$G_1\times\cdots\times  G_m$ is equivariantly (respecti\-vely, linearly
equivariantly) flattenable as well. In particular, the group
\begin{equation}\label{prod{GL}}
{\mathrm {GL}}_{n_1}\times\cdots\times {\mathrm {GL}}_{n_s}
\end{equation}
 is linearly
equivariantly flattenable for any $n_1,\ldots, n_s$. Taking
$n_1=\ldots=n_s=1$ yields that every
affine algebraic torus is linearly
equivariantly flattenable.
\end{example}

\begin{example}
Generalizing Example 1,
let $A$ be a finite-dimensional asso\-ciative $k$-algebra
with identity. The group $A^\ast$
is a connected affine algebraic group. It is open in $A$
and invariant with respect to the action of $A^\ast$ on $A$
by left multiplication, cf. \cite[I,1.6(9)]{Bo1991}.
Hence $A^\ast$ is a linearly
equivariantly
 flattenable group. For $A={\rm Mat}_{n\times n}$, we obtain
$A^*={\mathrm {GL}}_n$. More generally, if $A$ is
semisimple, then $A^\ast$ is a
group of type \eqref{prod{GL}}, and all groups of type
\eqref{prod{GL}} are obtained in this way.
\end{example}

\begin{example}
 Every $G={\rm SL}_n\times {\rm GL}_1$ is equivariantly
 flattenable.\;Indeed, consider the $G$-module structure on
 $V={\rm Mat}_{n\times n}$ defined by the formula
\begin{equation*}
G\times V\to V,\quad ((s, a), x)\mapsto s\,x\,{\rm diag}(a, 1,\ldots, 1).
\end{equation*}
For $x={\rm diag}(1,\ldots, 1)$, the orbit map $G\to V$,
$g\mapsto g\cdot x$ is then the $G$-equivariant open embedding
\eqref{om}.
\end{example}

\begin{lemma}\label{lfli} Let $X$ be an algebraic variety endowed with an action of an algebraic group $G$.
\begin{enumerate}[\hskip 4.2mm \rm(a)]
\item If $X^G$ is reducible, then $X$ is not linearly equivariantly flattenable.
\item If $X$ is not linearly equivariantly flat\-ten\-able, but equivariantly flat\-ten\-able, then the action $\alpha$ of $G$ on
    ${\mathbf A}^n$, extending that on $X$, is non\-linea\-rizable.
\end{enumerate}
\end{lemma}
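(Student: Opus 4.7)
The proof plan is to unwind the definitions in both parts.

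For part (a), the key observation is that a linear action of $G$ on ${\mathbf A}^{\hskip -.4mm n}$ has as its fixed-point set a linear subspace of ${\mathbf A}^{\hskip -.4mm n}$ (the joint kernel of all operators $g - \mathrm{id}$), which is in particular irreducible. Now suppose toward a contradiction that $X$ is linearly equivariantly flattenable, so there exists a linear action of $G$ on some ${\mathbf A}^{\hskip -.4mm n}$ and a $G$-equivariant open embedding $\iota\colon X \hookrightarrow {\mathbf A}^{\hskip -.4mm n}$. Then $\iota$ identifies $X^G$ with $\iota(X)\cap ({\mathbf A}^{\hskip -.4mm n})^G$, which is an open subset of the irreducible variety $({\mathbf A}^{\hskip -.4mm n})^G$, hence empty or irreducible. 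Since $X^G$ is assumed reducible, this is impossible.

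For part (b), the argument is essentially tautological from the definition of a linearizable action. Assume $X$ is equivariantly flattenable via a $G$-equivariant open embedding $\iota\colon X \hookrightarrow ({\mathbf A}^{\hskip -.4mm n}, \alpha)$, and suppose that $\alpha$ were linearizable. By definition this means there is an automorphism $\varphi\in \Aut({\mathbf A}^{\hskip -.4mm n})$ such that the action $\alpha'(g,v) := \varphi(\alpha(g, \varphi^{-1}(v)))$ is linear. Then $\varphi\circ\iota\colon X \hookrightarrow ({\mathbf A}^{\hskip -.4mm n}, \alpha')$ is a $G$-equivariant open embedding with respect to the linear action $\alpha'$, contradicting the assumption that $X$ is not linearly equivariantly flattenable. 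Hence $\alpha$ is nonlinearizable.

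Both parts are straightforward consequences of Definition \ref{def2}, so there is no real obstacle to overcome; the only point requiring minor care is ensuring that the fixed-point set of a linear action is a linear (hence irreducible) subspace in part (a), and correctly matching the notion of linearizability (existence of a conjugating automorphism of ${\mathbf A}^{\hskip -.4mm n}$) in part (b).
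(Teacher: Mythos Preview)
Your proof is correct and follows essentially the same approach as the paper's own proof: for (a), both argue by contradiction that $X^G$ would sit as an open subset of the linear (hence irreducible) subspace $({\mathbf A}^{\hskip -.4mm n})^G$, and for (b), both compose the equivariant open embedding with the linearizing automorphism to obtain a linearly equivariant embedding. The only cosmetic difference is that you explicitly allow for the possibility that $X^G$ could be empty, which is harmless since a reducible set is nonempty.
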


\begin{proof} (a) Arguing on the contrary, assume
 that $X$ is linearly equivariantly flat\-tenable.
 Then there are a linear action of $G$ on
 ${\mathbf A}^n$ and  a
 $G$-equivariant open embedding $X\hookrightarrow {\mathbf A}^n$. We
 identify $X$ with its image. Then
 $X^G=X\cap ({\mathbf A}^n)^G$. Since $({\mathbf A}^n)^G$ is a linear subspace of ${\mathbf A}^n$,
 and $X$ is open in ${\mathbf A}^n$, this implies that $X^G$ is irreducible, contrary to the assumption. This proves (a).

 (b) Arguing on the contrary, assume that $\alpha$ is linearizable, i.e., there is a linear action $\beta$ of $G$ on ${\mathbf A}^n$ and a $G$-equivariant automorphism
 $\varphi\colon {\mathbf A}^n\to {\mathbf A}^n$, where the left (resp. the right) ${\mathbf A}^n$ is endowed
 with the action $\alpha$ (resp., $\beta$). Then the equivariant open embedding of $X$ in the left ${\mathbf A}^n$ composed with  $\varphi$ is an equivariant open embedding of $X$ in the right ${\mathbf A}^n$. Hence $X$ is linearly equivariantly flattenable, contrary to the assumption. This proves (b).
\quad $\square$ \end{proof}

\begin{theorem}\label{RLEF} The following properties of
a   connected reductive algebraic group
$G$ are equivalent:
\begin{enumerate}[\hskip 4.2mm \rm(a)]
\item $G$ is equivariantly flattenable;
\item $G$ is linearly equivariantly flattenable.
\end{enumerate}
\end{theorem}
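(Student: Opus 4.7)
The direction (b)$\Rightarrow$(a) is immediate from Definitions~\ref{def2} and~\ref{def3}. For (a)$\Rightarrow$(b), fix a $G$-equivariant open embedding $\iota\colon G\hookrightarrow\mathbf{A}^n$, where $G$ acts on itself by left translations and on $\mathbf{A}^n$ by some action $\alpha$. The image $\iota(G)$ is a single open $G$-orbit of $\mathbf{A}^n$, isomorphic as a $G$-variety to $G$; hence $\dim\mathbf{A}^n=\dim G$, the $G$-stabilizer of every point of $\iota(G)$ is trivial, and since $\iota(G)$ is open and dense in the irreducible variety $\mathbf{A}^n$, the restriction map gives $k[\mathbf{A}^n]^G\hookrightarrow k[\iota(G)]^G=k$, so $k[\mathbf{A}^n]^G=k$.

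The plan is then to invoke Luna's \'etale slice theorem in the strengthened form used in the proof of Theorem~\ref{thm7} (cf.\ \cite[p.\,98, Cor.\,2]{Lu1973}): the smooth affine $G$-variety $\mathbf{A}^n$, with reductive $G$ and $k[\mathbf{A}^n]^G=k$, is $G$-equivariantly isomorphic to $G\times^HV$, where $H$ is the (necessarily reductive) stabilizer of a point in the unique closed $G$-orbit $\mathcal O_0\subset\mathbf{A}^n$ and $V$ is a finite-dimensional $H$-module. Comparing dimensions and using that the generic $G$-stabilizer on $\iota(G)$ is trivial forces $\dim V=\dim H$ together with the existence of $v_0\in V$ whose $H$-orbit is open in $V$ and whose $H$-stabilizer is trivial; in particular, $H$ itself is already linearly equivariantly flattenable via the $H$-module $V$.

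The crux is to prove that $H=G$, i.e.\ that $\mathcal O_0$ is a $G$-fixed point. Granted this, the Luna decomposition collapses to the tautological $G\times^GV=V$ as $G$-varieties (with $V$ carrying its linear $G$-action), so the composition
$$G\stackrel{\iota}{\hookrightarrow}\mathbf{A}^n\stackrel{\sim}{\longrightarrow}V$$
is an equivariant open embedding of $G$ into the $G$-module $V$, which is (b); equivalently, by Lemma~\ref{lfli}(b), the action $\alpha$ is linearizable. To establish $H=G$, I would first produce a $G$-fixed point in $\mathbf{A}^n$: by Corollary~\ref{fre} the connected center $Z^0$ of $G$ is a positive-dimensional torus, so by Bia\l ynicki-Birula's theorem \cite[Thm.\,1]{Bi1966} it has a fixed point in $\mathbf{A}^n$, and the centrality of $Z^0$ propagates this to yield $\mathcal O_0\subset(\mathbf{A}^n)^{Z^0}$, giving $Z^0\subseteq H$. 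Upgrading $Z^0\subseteq H$ to $H=G$ is the main technical hurdle: one combines the very rigid structure of $V$ (an $H$-module of dimension $\dim H$ admitting $H$ as an open orbit with trivial stabilizer) with the decomposition $G=Z^0\cdot G^{\mathrm{der}}$ to rule out any proper reductive $H\supseteq Z^0$. This final reduction is the principal technical step of the argument.
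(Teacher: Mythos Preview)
Your setup via Luna's \'etale slice theorem mirrors the paper's exactly, and you correctly isolate the crux: proving $H=G$ (the paper writes $L$ for your $H$). However, your proof has a genuine gap precisely at that point. You yourself flag the step ``upgrading $Z^0\subseteq H$ to $H=G$'' as the principal technical hurdle, and then do not carry it out; the appeal to ``the very rigid structure of $V$'' together with $G=Z^0\cdot G^{\mathrm{der}}$ is only a hope, not an argument. Nothing obvious rules out a proper reductive subgroup $H\supsetneq Z^0$ admitting an $H$-module of dimension $\dim H$ with an open free $H$-orbit (indeed, any maximal torus of $G$ already has this property), so additional input is genuinely required to exclude $H\neq G$.

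The paper fills this gap by a topological argument rather than a structure-theoretic one. After the Luna step, one reduces via the Lefschetz principle to $k=\mathbf C$ and observes that $G\times^{H}V\cong\mathbf A^n$ is a vector bundle over $G/H$, so $G/H$ has the singular homology of a point; in particular $G/H$ is simply connected, forcing $H$ connected. For any connected complex reductive group $R$ one has $\dim R=\max\{i:H_i(R;\mathbf C)\neq 0\}$ (via the Iwasawa decomposition and Poincar\'e duality for the maximal compact). Feeding the vanishing of $H_{>0}(G/H)$ into the Leray--Serre spectral sequence of the fibration $G\to G/H$ with fiber $H$ yields $\dim_{\mathbf C}H_{m_G}(G)\leqslant\dim_{\mathbf C}H_{m_G}(H)$, hence $m_G\leqslant m_H$, i.e.\ $\dim G\leqslant\dim H$; combined with $H\subseteq G$ and connectedness this gives $H=G$. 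Your Bia\l ynicki-Birula observation $Z^0\subseteq H$ is correct but plays no role in the paper's argument.
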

\begin{proof} Let $G$ be equivariantly flattenable. By
Definitions \ref{def2},  \ref{def3}, we may (and shall)
identify $G$ with an open orbit in some
${\mathbf A}^{\hskip -.4mm n}$ endowed with a regular
action of $G$. Openness of this orbit implies
$k[{\mathbf A}^{\hskip -.4mm n}]^G=k$. Hence, by
\cite[p. 98, Cor. 2]{Lu1973} (see also \cite[Thm. 6.7]{PV1994}),
there are a closed reductive subgroup $L$ of $G$ and
a finite-dimensional algebraic $L$-module $V$ such that
${\mathbf A}^{\hskip -.4mm n}$ and $G\times^LV$
are $G$-equivariantly isomorphic.
We claim that this implies
\begin{equation}\label{H=G}
L=G.
\end{equation}
If \eqref{H=G} is proved, then ${\mathbf A}^{\hskip -.4mm n}$
and $V$ are $G$-equivariantly isomorphic, which
proves (a)$\Rightarrow$(b).

So it remains to prove \eqref{H=G}. In view of
connectedness of  $G$, to this end it suffices to prove the equality $\dim(L)=\dim(G)$.

Since ${\rm char}(k)=0$, by the Lefschetz principle
we may (and shall) assume that $k=\mathbf C$; in the
remainder of the proof topological terms are related to
the Hausdorff $\mathbf C$-topology. Since
${\mathbf A}^{\hskip -.4mm n}$ is simply connected,
$G/L$ is simply connected as well, hence $L$ is connected.

We now note that the dimension any connected complex reductive
algeb\-raic group $R$ is equal to the maximum $m_R^{\ }$ of $i$
such that $H_i(R)\neq 0$ (singular homology with complex
coefficients). Indeed, if $K$ is a maximal compact sub\-group
of $R$, then the Iwasawa decomposition of $R$ shows
that $R$, as a manifold, is a product of $K$ and a Euclidean
space. Hence $R$ and $K$ have the same homology.
Since $K$ is a compact oriented manifold, this shows
that $m_R^{\ }$ is equal to the dimension of the Lie
group $K$. As $R$ is the complexification of $K$,
the statement follows.

So to prove \eqref{H=G} is the same as to prove
$m_G^{\ }=m_L^{\ }$. In fact,
since $L$ is a subgroup of $G$,
the above equality $m_R^{\ }=\dim(R)$ yields
$m_G^{\ }\geqslant m_L^{\ }$, so to prove  $m_G^{\ }=m_L^{\ }$
we only need to prove the inequality
\begin{equation}\label{m}
m_G^{\ }\leqslant m_L^{\ }.
\end{equation}

As is known (see, e.g., \cite[Chap. IX, Thm. 11.1]{Hu1959}),
the spectral sequence of the natural fiber bundle $G\to G/L$
yields the following inequality for the Betti numbers
\begin{equation}\label{spectr}
\dim_{\mathbf C}(H_{m_G^{\ }}(G))\leqslant
\sum_{i+j=m_G^{\ }} \dim_{\mathbf C}(H_i(G/L))
\dim_{\mathbf C}(H_j(L)).
\end{equation}
On the other hand, since $G\times^LV$ is a vector bundle
over $G/L$ and $G\times^LV$ is isomorphic to
${\mathbf A}^{\hskip -.4mm n}$, we have
\begin{equation}\label{has}
 \dim_{\mathbf C}(H_i(G/L))=
  \dim_{\mathbf C}(H_i({\mathbf A}^{\hskip -.4mm n}))
  =\begin{cases}
 1&\mbox{for $i=0$},\\
 0&\mbox{for $i>0$}.
 \end{cases}
 \end{equation}
 From \eqref{spectr}, \eqref{has} we infer that
 \begin{equation}\label{<}
 0< \dim_{\mathbf C}(H_{m_G^{\ }}(G))\leqslant
 \dim_{\mathbf C}(H_{m_G^{\ }}(L)).
 \end{equation}
 The definition of $m_L^{\ }$ and \eqref{<} then
 yield \eqref{m}. This completes the proof.
\quad $\square$ \end{proof}

\begin{remark}
Using the same argument, but (in the spirit of  \cite{Bo1985})
\'etale cohomology in place of singular homology,
one can avoid applying the Lefschetz principle and
adapt the above proof to the case of  base field of
arbitrary characteristic.
\end{remark}

\begin{theorem}\label{LEF} Let $G$ be a connected
affine algebraic group and let
$L$ be a Levi subgroup of $G$. If $L$ is equivariantly
flattenable, then so is $G$.
\end{theorem}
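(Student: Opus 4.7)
The plan is to assemble an affine embedding of $G$ out of a linear embedding of $L$ together with the natural linear structure on the unipotent radical. Since $\mathrm{char}(k)=0$, we have the semidirect-product decomposition $G=\mathcal{R}_uG\rtimes L$; every $g\in G$ has a unique expression $g=ul$ with $u\in \mathcal{R}_uG$ and $l\in L$, and the exponential $\exp\colon\mathfrak{u}\to\mathcal{R}_uG$, where $\mathfrak{u}=\mathrm{Lie}\,\mathcal{R}_uG$, is an isomorphism of varieties, equivariant with respect to the conjugation action of $L$ on $\mathcal{R}_uG$ and the adjoint action $\mathrm{Ad}$ of $L$ on $\mathfrak{u}$. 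Because $L$ is connected reductive, Theorem \ref{RLEF} lets me upgrade the hypothesis: fix a finite-dimensional $L$-module $V$ and an $L$-equivariant open embedding $\varphi\colon L\hookrightarrow V$.

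Now set $W:=\mathfrak{u}\oplus V$ (an affine space) and define $\psi\colon G\to W$ by $\psi(ul):=(\log u,\varphi(l))$. Its image is $\mathfrak{u}\times\varphi(L)$, which is open in $W$ because $\varphi(L)$ is open in $V$; the inverse $(X,v)\mapsto\exp(X)\cdot\varphi^{-1}(v)$ is a morphism on this image, so $\psi$ is an open embedding. Next, define an action of $G$ on $W$ by
$$
(u_1l_1)\cdot(X,v):=\bigl(\log\bigl(u_1\cdot\exp(\mathrm{Ad}(l_1)X)\bigr),\;l_1\cdot v\bigr),\qquad (X,v)\in\mathfrak{u}\oplus V.
$$
This is a morphism $G\times W\to W$ since $\exp$ is biregular and the adjoint and $V$-actions of $L$ are algebraic; by construction it restricts under $\psi$ to left translation of $G$ on itself.

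The only genuine verification is that this prescription defines a $G$-action. Using the semidirect-product law $(u_1l_1)(u_2l_2)=(u_1\cdot l_1u_2l_1^{-1})(l_1l_2)$ and the identity $l\,\exp(Y)l^{-1}=\exp(\mathrm{Ad}(l)Y)$, a direct computation shows that both $(g_1g_2)\cdot(X,v)$ and $g_1\cdot(g_2\cdot(X,v))$ have $\mathfrak{u}$-coordinate $\log\bigl(u_1\cdot l_1u_2\exp(\mathrm{Ad}(l_2)X)l_1^{-1}\bigr)$, while the $V$-coordinates agree by the $L$-module axioms. Once this is in hand, $\psi$ identifies $G$ with a $G$-invariant open subvariety of the affine space $W$, proving equivariant flattenability. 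The main—indeed only—obstacle I anticipate is the bookkeeping in the associativity check; the essential conceptual input is Theorem \ref{RLEF}, since without a globally linear ambient $L$-action on $V$ the formula above would not extend from $\psi(G)$ to a regular action on all of $W$.
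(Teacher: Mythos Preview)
Your argument is correct. Both proofs begin with the same move—invoking Theorem \ref{RLEF} to realize $L$ as an open orbit in a linear $L$-module $V$—but they diverge after that. The paper embeds $G$ into the homogeneous fiber space $G\times^{L}V$, observes that this is a vector bundle over $G/L\cong\mathcal{R}_uG$, and then appeals to the Quillen--Suslin theorem to conclude that $G\times^{L}V$ is isomorphic to an affine space. You instead write the target affine space down explicitly as $\mathfrak{u}\oplus V$, using the section $\mathcal{R}_uG\hookrightarrow G$ furnished by the Levi decomposition (via $\exp/\log$) to give a concrete trivialization and a concrete formula for the $G$-action. In effect you are exhibiting by hand the trivialization that the paper obtains abstractly from Quillen--Suslin; since the principal $L$-bundle $G\to G/L$ already has a global section, your approach shows that invoking Quillen--Suslin here is unnecessary. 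The cost is the associativity bookkeeping you flagged, but that check goes through exactly as you indicate, using $l\exp(Y)l^{-1}=\exp(\mathrm{Ad}(l)Y)$ and the semidirect-product multiplication. The paper's route is more conceptual and makes the $G$-equivariance automatic; yours is more elementary and self-contained.
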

\begin{proof}
Let $L$ be equivariantly flattenable. Then by
Theorem \ref{RLEF},
 we may (and shall) assume that $L$, as a variety with the
 $L$-action by left multiplication, is an open orbit
 $\mathcal O$ of an algebraic $L$-module $V$. This
 implies that $\dim(L)=\dim(V)$, and therefore,
\begin{equation}\label{ST}
\dim(G\times^L V)=\dim(G)-\dim(L)+\dim(V)=\dim(G).
\end{equation}

We identify $V$ with the fiber of $G\times^LV\to G/L$
over the point of $G/L$ corresponding to $L$. Since the
$L$-stabilizer of any point $v\in \mathcal O$ is trivial,
the $G$-stabilizer
of $v$ is trivial as well. From this  and \eqref{ST} we
infer that $G\to G\times^L V$, $g\mapsto g\cdot v$,
is a $G$-equivariant (with respect to the action of $G$
on itself by left multiplication) open embedding. Now
we note that
$G/L$ is isomorphic to the underlying variety of ${\mathcal R}_uG$,
 therefore, by Lemma  \ref{afsp}, to an affine space. Since,
 by  Quillen--Suslin \cite{Qu1976}, \cite{Su1976},
 algebraic vector bundles over affine spaces are trivial, we
conclude that  the variety $G\times^L V$ is isomorphic
to an affine space. This completes the proof.
\quad $\square$ \end{proof}

\begin{corollary}\label{SEF} Every connected
solvable affine algebraic group is equiva\-riantly flattenable.
\end{corollary}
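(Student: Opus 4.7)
The plan is to deduce the corollary immediately from Theorem \ref{LEF} by exhibiting a suitable equivariantly flattenable Levi subgroup. Concretely, for a connected solvable affine algebraic group $G$, one chooses a Levi decomposition $G=L\ltimes{\mathcal R}_uG$ (which exists and is unique up to conjugation in characteristic zero, as recalled in the introduction) and verifies that $L$ is equivariantly flattenable; then Theorem \ref{LEF} transfers this property to $G$.

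First, I would observe that since $G$ is solvable, so is its Levi subgroup $L$. But $L$ is connected reductive by definition, and a connected reductive solvable group is a torus. Hence $L$ is a torus $T$. Second, I would invoke the earlier example showing that every affine algebraic torus is linearly equivariantly flattenable: taking $n_1=\cdots=n_s=1$ in the product-of-${\mathrm{GL}}_n$'s construction gives a $T$-equivariant open embedding of $T={\mathrm{GL}}_1^s$ into ${\mathbf A}^{\hskip -.4mm s}$ with the action by coordinate-wise multiplication. In particular $T$ is equivariantly flattenable.

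Finally, applying Theorem \ref{LEF} to the Levi subgroup $L=T$ of $G$ yields the equivariant flattenability of $G$, completing the proof. No real obstacle arises: the statement is a clean three-step composition — (i) the reductive part of a connected solvable group is a torus, (ii) tori are (even linearly) equivariantly flattenable by an explicit earlier example, and (iii) Theorem \ref{LEF} propagates equivariant flattenability from a Levi subgroup to the ambient group. The only substantive work has already been done in Theorem \ref{LEF}; this corollary is just the specialization to the case where the reductive quotient is abelian.
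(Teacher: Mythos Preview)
Your proof is correct and follows exactly the same route as the paper's own argument: the Levi subgroup of a connected solvable affine algebraic group is a torus, tori are equivariantly flattenable by the earlier example, and Theorem~\ref{LEF} then yields the conclusion. The paper's proof is simply a two-sentence version of what you wrote.
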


\begin{proof}  Levi subgroups of connected solvable
affine algebraic groups are tori. As the latter are
equivariantly flattenable,
the claim follows from Theorem \ref{LEF}.
\quad $\square$ \end{proof}

\subsection*{9.  Equivariant Flattenability vs. Linear Equivariant Flattenability}

The following shows that there are
affine equivariantly flattenable varieties
which are not linearly equivariantly flat\-ten\-able.

\begin{example}
As is known (see, e.g., references in survey \cite{Kr1996}),
there are affine spaces endowed with nonlinea\-riz\-able actions
of reductive algebraic groups. So they are equivariantly flattenable, but,
by Lemma  \ref{thm12},
not linearly equivariantly flat\-ten\-able.
\end{example}

\begin{question}
Are there flattenable reductive algebraic groups, which are
not equi\-va\-riantly flattenable?
\end{question}

\subsection*{10. Equivariant Flattenability vs. Linearizability}

In this section, we construct a series of affine flattenable varieties
endowed with actions of finite groups,
which are not linearly equivariantly flattenable.





The construction is as follows.

Take a pair $S$, $G$, where
\begin{enumerate}[\hskip 5.6mm \rm(A)]

\item[$({\rm c}_1)$] $S$ is a connected semisimple
algebraic group;
\item[$({\rm c}_2)$] $G$ is a finite subgroup of $S$ such that
${\mathcal Z}_S(G)$
(the   centralizer of $G$
in $S$) is finite and nontrivial.
\end{enumerate}

Note that such pairs $S, G$ do exist.

\begin{example} Let $F$ be
a nontrivial finite group, satisfying the conditions:
\begin{enumerate}[\hskip 4.2mm \rm(i)]
 \item there are no nontrivial characters $F\to {\rm GL}_1$;
 \item there is a faithful irreducible representation
 $\varphi\colon F\to {\rm GL}_n$ for some\;$n$.
 \end{enumerate}
 For instance, (i) and (ii) hold for any nontrivial simple $F$.\;By
(i), we have
 $\varphi(F)\subset {\rm SL}_n$, and by (ii) we may
 (and shall) iden\-ti\-fy $F$ with $\varphi(F)$; thus
 $F$
 is
 a nontrivial subgroup of ${\rm SL}_n$ whose natural linear action on $k^n$ is irreducible.\;By
 Schur's lemma, ${\mathcal Z}_{{\rm SL}_n}(F)$
is the cyclic group
 $\{{\varepsilon}I_n\mid \varepsilon\in k^\ast, \varepsilon^n=1\}$
 of order $n$.
 Since $F$ is nontrivial, $n\geqslant 2$.
Hence $S={\rm SL}_n$, $G=F$ is
 the pair of interest.
\end{example}

\begin{example}\label{re} (See \cite{Bo1961}, \cite{CS1987}, \cite{Se1999}). Let $S$ be a connected semisimple algebraic group. Let $T$ and $N_S(T)$ be resp. a maximal torus of $S$ and its normalizer in $S$.  Assume that the Weyl group $N_S(T)/T$ contains $-1$; if $S$ is simple, this is equivalent to the condition that $S$ is {\it not} of type ${\sf A}_\ell$ for $\ell\geqslant 2$,
${\sf D}_\ell$ for odd $\ell$, or ${\sf E}_6$ (see \cite[Tabl. I--IX]{Bou1968}).
Let $n$ be an element of $N_S(T)$ representing $-1$ of $N_S(T)/T$. Then either $n^2=1$ (this is so if $S$ is adjoint)
or $n^2$ is an element of order 2 of the center of $S$. Assume that $n^2=1$ and let $G$ be the
subgroup of $S$ generated by $n$ and all the elements of $T$ whose order is 2. Then $G$ is an elementary Abelian group of order $2^{{\rm rk}(S)+1}$, for which condition $({\rm c}_2)$ holds.
\end{example}

We return back to the description of the construction. By $({\rm c}_2)$, there is a non\-iden\-tity element $z\in {\mathcal Z}_S(G)$, and its order is finite.  In view of ${\rm char}\,(k)=0$, the finiteness of the order entails that $z$ is semisimple. Hence (see \cite[11.10]{Bo1991}) the element $z$ lies is a maximal torus of $S$. Since, in turn, any torus of $S$ lies in a Borel subgroup of $S$ (cf. \cite[11.3]{Bo1991}), we infer that there is a Borel subgroup $B$ of $S$ containing
two different elements of ${\mathcal Z}_S(G)$:
\begin{equation}\label{bcz}
1, z\in B\cap {\mathcal Z}_S(G),\quad z\neq 1.
\end{equation}

 Let $B^-$ be the  Borel subgroup of $S$ opposite to $B$. Then the ``big cell''
 $\Theta:=B^-B$ is an open subset of $S$ isomorphic
 to the comp\-lement of a union of several coordinate
 hyperplanes in $L:={\mathbf A}^{\hskip -.4mm \dim(S)}$;
 in particular, $\Theta$ is an affine flattenable variety. In view of \eqref{bcz}, we have
\begin{equation}\label{tcz}
1, z\in \Theta\cap {\mathcal Z}_S(G),\quad z\neq 1.
\end{equation}

Now we consider the conjugating action of $G$ on
 $S$.\;Its
 fixed point set $S^G$ is ${\mathcal Z}_{S}(G)$.\;The
 variety
 \begin{equation}
 \label{vvX}
 X:=\bigcap_{g\in G} g\Theta g^{-1}
 \end{equation}
 is a $G$-stable open subset of
 $S$.\; Since  $\Theta$ is an affine flattenable variety, $X$
 is such a variety, too.\;From \eqref{tcz}, \eqref{vvX}, and the finiteness of
 ${\mathcal Z}_{S}(G)$ we conclude that
 \begin{equation}\label{cp}
X^G=X\cap S^G\;\;\mbox{is a finite set containing two different points $1$ and $z$}.
 \end{equation}

 \begin{proposition} The affine flattenable $G$-variety $X$ defined by formula {\rm \eqref{vvX}} shares the following properties:
 \begin{enumerate}[\hskip 3.2mm\rm(i)]
 \item $X$ is not linearly equivariantly flattenable.
 \item 
 The alternative holds:
 \begin{enumerate}[\hskip 0mm ---]
 \item ei\-ther $X$
is not equivariantly flattenable,
\item or $X$ is equivariantly flattenable, but
the action of $G$ on an affine space, extending that on $X$,
is nonlinearizable.
 \end{enumerate}
 \end{enumerate}
 \end{proposition}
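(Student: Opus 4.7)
\medskip
\noindent\textbf{Proof plan.}

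The proposition really packages two separate assertions, and the groundwork has already been laid by Lemma \ref{lfli} together with the preparatory computation \eqref{cp}. My plan is therefore to deduce (i) directly from Lemma \ref{lfli}(a), and then obtain (ii) as an automatic consequence of (i) via Lemma \ref{lfli}(b).

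For part (i), the key observation is that \eqref{cp} gives $X^G = X \cap S^G$ as a finite set containing the two distinct points $1$ and $z$. A finite set with more than one point is a disjoint union of more than one singleton, hence reducible as an algebraic variety. Lemma \ref{lfli}(a) then applies verbatim: if $X$ were linearly equivariantly flattenable, the fixed point locus would have to be the intersection of the open set $X$ with a linear subspace of some ${\mathbf A}^{\hskip -.4mm n}$, and such an intersection is irreducible. This contradiction yields (i).

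For part (ii), the dichotomy is tautological: either $X$ fails to be equivariantly flattenable (the first alternative) or it does admit a $G$-equivariant open embedding into some ${\mathbf A}^{\hskip -.4mm n}$ endowed with an action $\alpha$ of $G$. In the latter case, suppose for contradiction that $\alpha$ were linearizable; then by Lemma \ref{lfli}(b) the variety $X$ would be linearly equivariantly flattenable, contradicting (i). Hence $\alpha$ must be nonlinearizable, giving the second alternative.

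There is no real obstacle to overcome at this stage, since all the nontrivial content — the construction of $X$, its affineness and flattenability, the reducibility of $X^G$ coming from \eqref{cp}, and the general criteria of Lemma \ref{lfli} — is already in place. The proof reduces to citing those ingredients in the correct order; the conceptual weight of the proposition lies in the construction of $X$ and in verifying that ${\mathcal Z}_S(G)\cap \Theta$ contains a nonidentity element (which was accomplished through the Borel subgroup argument leading to \eqref{bcz} and \eqref{tcz}).
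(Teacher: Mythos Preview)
Your proposal is correct and follows exactly the same route as the paper: use \eqref{cp} to conclude that $X^G$ is reducible, invoke Lemma~\ref{lfli}(a) for (i), and then derive (ii) from (i) via Lemma~\ref{lfli}(b). The paper's proof is just a terser version of what you wrote; your added commentary on why a finite set with at least two points is reducible and on the tautological nature of the dichotomy in (ii) is accurate but not strictly necessary.
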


 \begin{proof} 
 In view of \eqref{cp}, the variety
 $X^G$ is a reducible.
 By Lemma \ref{lfli}(a),
 this implies (i). In turn, (i) and Lemma \ref{lfli}(b) imply (ii).
 \quad $\square$
 \end{proof}

 Thus there are two a priori possibilities: our construction yields either an example of an action of a finite group on a flattenable variety, which is not equi\-vari\-antly flattenable,
or an ex\-ample of a nonlinearizable action of a finite group on an affine space.
The linearization problem for reductive group actions on affine spaces has received much attention in the literature, but,
to the best of my knowledge, at this writing (October 2019)
the existence of nonlinearizable actions of finite Abelian groups on affine spaces remains an unsolved problem.
Being linked with this subject, our construction leads to the intriguing ques\-ti\-ons:

\begin{question} Is each of these two a priori possibilities realizable for an
appro\-pri\-ate pair $S$, $G$?
\end{question}

\begin{question} Take $S={\rm PGL}_2$, and let $G$ be the group from Example \ref{re}. This $G$ is the Klein four-group, it is  isomorphic to $\mathbb Z/2\oplus\mathbb Z/2$. The variety $X$ is isomorphic to an affine open subset of ${\mathbf A}\!^3$. Which of the two a priori possibilities indicated above for the action of $G$ on $X$ is realized in this case?
\end{question}



\subsection*{11. Equivariantly Flattenable Subgroups
of the Cremona Groups}

As an application, below is briefly surveyed
a special role of equivariantly flat\-ten\-able subgroups in
the conjugacy problem for algebraic subgroups of the
Cremona groups ${\rm Cr}_n$
(i.e., in the rational linearization problem). We refer to
\cite{Po2013_1}, \cite{PV1994} and  references therein
regarding the basic definitions and properties of rational
algebraic group actions and, in particular, the definition of embeddings
\begin{equation}\label{Cremona}
{\rm Cr}_1\subset {\rm Cr}_{2}\subset\cdots\subset
{\rm Cr}_n\subset {\rm Cr}_{n+1}\subset\cdots.
\end{equation}

\begin{theorem}[{{\rm \cite[Thm. 2.1]{Po2013_1}}}]\label{rconj}
Let $G$ be a connected algebraic sub\-group of
the Cremona group ${\rm Cr}_n$. Assume that
\begin{enumerate}[\hskip 2.2mm \rm(i)]
\item $G$ is linearly equivariantly flattenable;
\item the natural rational action of $G$ on ${\mathbf A}^{\hskip -.4mm n}$
is locally free.
\end{enumerate}
If the field extension $k({\mathbf A}^{\hskip -.4mm n})^G/k$
is purely transcendental, then $G$ is conjugate in  ${\rm Cr}_n$
to a subgroup of
${\mathrm {GL}}_n$ \lb i.e., the natural rational action of $G$ on
${\mathbf A}^{\hskip -.4mm n}$ is rationally linearizable\rb.
\end{theorem}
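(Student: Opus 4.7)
The plan is to reduce the rational linearization of the $G$-action on $\mathbf{A}^{n}$ to a linear model built from the flattening embedding supplied by (i). By Definitions \ref{def2}--\ref{def3}, hypothesis (i) furnishes a finite-dimensional $G$-module $V$ and a $G$-equivariant open embedding $\iota\colon G \hookrightarrow V$ with $G$ acting on itself by left multiplication; in particular $\dim V = \dim G$ and the open orbit $\iota(G)\subset V$ is free.

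First, I would invoke Rosenlicht's theorem: hypothesis (ii) yields a rational $G$-equivariant quotient $\pi\colon \mathbf{A}^{n} \dashrightarrow Y$ with $k(Y) = k(\mathbf{A}^{n})^{G}$, whose generic fibre $E$ is a $G$-torsor over the field $K := k(Y)$. Pure transcendence of $K/k$ identifies $Y$ birationally with $\mathbf{A}^{n - \dim G}$ carrying the trivial $G$-action.

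Next, I would trivialize the torsor $E$ by twisting with $V$. The $K$-scheme $E \times^{G} V$ is a twisted form of the linear $G$-representation on $V$, hence a $K$-vector space by Hilbert 90 applied to $\mathrm{GL}(V)$, and $\iota$ induces an open embedding $E \cong E \times^{G} G \hookrightarrow E \times^{G} V$. Since $K$ is infinite, every nonempty Zariski-open subset of an affine $K$-space contains a $K$-point, so $E(K) \neq \emptyset$ and the torsor $E$ is trivial. A trivialization produces a $G$-equivariant birational isomorphism $\mathbf{A}^{n} \dashrightarrow Y \times G$ (left multiplication on the second factor, trivial action on $Y$). Composing with $\mathrm{id}_{Y} \times \iota$ yields a $G$-equivariant birational map from $\mathbf{A}^{n}$ onto an open subset of $Y \times V$, on which the $G$-action is linear. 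Since $\dim(Y \times V) = n$ and $Y$ is birational to an affine space, this exhibits $\mathbf{A}^{n}$ with its given Cremona action as $G$-equivariantly birational to $\mathbf{A}^{n}$ with a linear $G$-action, i.e., $G$ is conjugate in $\mathrm{Cr}_{n}$ to a subgroup of $\mathrm{GL}_{n}$.

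The main obstacle is the trivialization step: the flattening embedding $\iota$ is what implicitly makes $G$ special in the sense of Serre, and the critical technical point is to verify that $E \times^{G} V$ is a $K$-vector space and that $\iota$ induces the open embedding $E \hookrightarrow E \times^{G} V$. Once this is in hand, the remainder is essentially bookkeeping.
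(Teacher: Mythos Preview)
The paper does not actually prove this theorem; it is quoted verbatim from \cite{Po2013_1} without argument. So there is no ``paper's own proof'' to compare against. That said, your outline is correct and is the natural way to establish the result.

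A few comments on the one step you flag. Since $G$ acts linearly on $V$, the twist $E\times^{G_K}V_K$ is classified by the image of $[E]\in H^1_{\text{\'et}}(K,G)$ under $H^1_{\text{\'et}}(K,G)\to H^1_{\text{\'et}}(K,\mathrm{GL}(V))$, and the latter group vanishes by Hilbert~90; so $E\times^{G_K}V_K\cong V_K$. Because $\iota\colon G\hookrightarrow V$ is a $G$-equivariant open immersion, fppf descent gives that the induced map $E\cong E\times^{G_K}G_K\hookrightarrow E\times^{G_K}V_K\cong V_K$ is again an open immersion. Then $K$ infinite forces $E(K)\neq\varnothing$, and the torsor is trivial. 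The rest is, as you say, bookkeeping: a trivialization yields a $G$-equivariant birational isomorphism $\mathbf A^n\dashrightarrow Y\times G$, and composing with $\mathrm{id}_Y\times\iota$ lands in $Y\times V\cong\mathbf A^{n-\dim G}\times V$ with its linear $G$-action.

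Note that the trivialization step is precisely a proof that $G$ is special in the sense of Serre (any $G$-torsor over a field is trivial, hence any $G$-torsor over any base is Zariski-locally trivial). The paper does prove this fact separately, as Theorem~\ref{flsp}, but by a rather different method: it realizes $G$ as a subgroup of $\mathrm{GL}(V)$ and exhibits an explicit rational section of $\mathrm{GL}(V)\to\mathrm{GL}(V)/G$ using the open orbit in $V$, then invokes \cite[Sect.~4.3, Thm.~2]{Se1958}. Your Hilbert~90/descent argument is more streamlined for the present purpose, since it trivializes the specific torsor $E$ directly rather than passing through the general statement that $G$ is special.
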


For tori, the sufficient condition of Theorem \ref{rconj}
for rational linearization is also necessary:

\begin{theorem}[{{\rm \cite[Thm. 2.4]{Po2013_1}}}] Let $T$
be an affine algebraic torus in the Cre\-mo\-na group ${\rm Cr}_n$.
The following properties are equivalent:
\begin{enumerate}[\hskip 4.2mm \rm(a)]
\item $T$ is conjugate in V to a subgroup of ${\mathrm {GL}}_n$;
\item the field extension $k({\mathbf A}^{\hskip -.4mm n})^T/k$
is purely transcendental.
\end{enumerate}
 \end{theorem}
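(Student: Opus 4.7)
The plan is to handle the two implications separately. The direction (a) $\Rightarrow$ (b) is the classical statement that rational invariants of a linear torus action on an affine space are purely transcendental, while (b) $\Rightarrow$ (a) reduces to the preceding Theorem \ref{rconj}.

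For (a) $\Rightarrow$ (b), I would conjugate within $\mathrm{Cr}_n$ to assume $T \subseteq \mathrm{GL}_n$ acts linearly on $\mathbf{A}^n$; conjugation in $\mathrm{Cr}_n$ preserves the isomorphism type of the invariant field over $k$, so it preserves the property of being purely transcendental. Diagonalizing, I would choose coordinates $x_1,\ldots,x_n$ on which $T$ acts by characters $\chi_1,\ldots,\chi_n \in X(T)$. Restricting to the open subtorus $\mathbf{G}_m^n \subseteq \mathbf{A}^n$, the action of $T$ becomes translation via the homomorphism $T \to \mathbf{G}_m^n$, $t \mapsto (\chi_1(t),\ldots,\chi_n(t))$, whose image is a subtorus $T' \subseteq \mathbf{G}_m^n$. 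Then $k(\mathbf{A}^n)^T = k(\mathbf{G}_m^n)^T = k(\mathbf{G}_m^n / T')$, and since the quotient of a torus by a subtorus is again a torus, this field is purely transcendental over $k$.

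For (b) $\Rightarrow$ (a), I would apply Theorem \ref{rconj} with $G = T$. The two hypotheses of that theorem must be verified. Linear equivariant flattenability of $T$ is immediate: every affine algebraic torus is a product of copies of $\mathrm{GL}_1$, hence linearly equivariantly flattenable by the examples of Section 8. Local freeness of the $T$-action on $\mathbf{A}^n$ follows from faithfulness of the embedding $T \subseteq \mathrm{Cr}_n$: if some $t \in T$ fixed a generic point of $\mathbf{A}^n$, then the corresponding rational self-map would agree with the identity on a dense open set and thus be the identity in $\mathrm{Cr}_n$, forcing $t = 1$; hence the generic $T$-stabilizer is trivial. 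Combining (i) and (ii) with the purely transcendental hypothesis (b), Theorem \ref{rconj} yields that $T$ is conjugate in $\mathrm{Cr}_n$ to a subgroup of $\mathrm{GL}_n$.

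No substantive obstacle is anticipated. The entire substance is carried by Theorem \ref{rconj}, and its hypotheses specialize transparently in the torus case; the only verification requiring any thought is the passage from faithfulness to local freeness, which is essentially tautological for actions by birational automorphisms.
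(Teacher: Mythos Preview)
The paper does not give its own proof of this theorem; it is simply quoted from \cite[Thm.~2.4]{Po2013_1} as part of the survey in Section~11. So there is nothing to compare against directly. Your argument is correct and is the natural one in this context: the implication (b)$\Rightarrow$(a) is precisely a specialization of the preceding Theorem~\ref{rconj}, and (a)$\Rightarrow$(b) is the classical fact that rational invariants of a linear torus action are purely transcendental. One small remark on your verification of local freeness: what you wrote literally establishes faithfulness (no nontrivial $t$ acts as the identity on $k(\mathbf{A}^n)$), and the passage from faithful to generically free uses that $T$ is diagonalizable---there is a dense open set on which all stabilizers coincide with a single closed subgroup $S\subseteq T$, and then every $s\in S$ acts trivially on that open set, hence on $k(\mathbf{A}^n)$, forcing $S=\{1\}$. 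This is implicit in your phrasing but worth making explicit, since for nonabelian $G$ faithfulness alone does not give local freeness.
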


 The sufficient condition of Theorem \ref{rconj} for rational
 linearization always holds true in the stable range:

 \begin{theorem}[{{\rm \cite[Lem. 2.2 \& Thm. 2.2]{Po2013_1}}}] \label{strl}
 Let $G$ be a connected algebraic subgroup of the Cremona group ${\rm Cr}_n$
 such that assumptions {\rm (i)}, {\rm (ii)} of Theorem {\rm\ref{rconj}} hold.
 Then there is an integer $s\geqslant 0$ such that, for the rational action of
 $G$ on ${\mathbf A}^{\hskip -.4mm n+s}$ determined by
 embedding  {\rm\eqref{Cremona}},
 the field extension
 $k({\mathbf A}^{\hskip -.4mm n+s})^G/k$ is purely transcendental.
 \end{theorem}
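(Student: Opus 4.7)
The plan is to use linear equivariant flattenability of $G$ to realize it as an open orbit in a linear representation $V$, and then apply a no-name-lemma argument to change coordinates after stabilizing by $V$. By assumption (i), there exist a linear action of $G$ on some $V := {\mathbf A}^N$ and a $G$-equivariant open embedding $G \hookrightarrow V$; since $G$ acts transitively on itself, the image is the unique open $G$-orbit in $V$, forcing $N = \dim G$. Under the embedding ${\rm Cr}_n \subset {\rm Cr}_{n+N}$ from \eqref{Cremona}, the rational action of $G$ on ${\mathbf A}^{n+N}$ is, up to identification, the product of the given action on ${\mathbf A}^n$ and the trivial action on $V$, so
\begin{equation*}
k({\mathbf A}^{n+N})^G = k({\mathbf A}^n)^G(t_1,\ldots,t_N)
\end{equation*}
with $t_1,\ldots,t_N$ the coordinates on $V$. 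The task becomes to show that the right-hand side is purely transcendental over $k$.

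I then compute $k({\mathbf A}^n \times V)^G$ (with the diagonal action, rational on ${\mathbf A}^n$ and linear on $V$) in two ways. First, by the no-name lemma of Bogomolov--Katsylo, applicable because of (ii) and the linearity of the $G$-action on $V$, one obtains a birational $G$-isomorphism ${\mathbf A}^n \times V \sim ({\mathbf A}^n/G) \times V$ with trivial $G$-action on the right-hand factor, whence
\begin{equation*}
k({\mathbf A}^n \times V)^G \cong k({\mathbf A}^n)^G(t_1,\ldots,t_N).
\end{equation*}
Second, since $V$ contains $G$ as a dense open orbit, ${\mathbf A}^n \times V$ contains the dense $G$-stable open subset ${\mathbf A}^n \times G$, on which the diagonal action (left multiplication on the $G$-factor) is free and admits the geometric quotient $(x,g) \mapsto g^{-1}\cdot x$ onto ${\mathbf A}^n$. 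Hence
\begin{equation*}
k({\mathbf A}^n \times V)^G \cong k({\mathbf A}^n) = k(x_1,\ldots,x_n),
\end{equation*}
which is purely transcendental over $k$. Comparing the two descriptions gives the theorem with $s = N = \dim G$.

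The main obstacle is the rigorous invocation of the no-name lemma in the present \emph{rational} action setting: one must check that, after localizing at the generic point of the (birational) quotient ${\mathbf A}^n/G$, the $G$-equivariant trivial vector bundle ${\mathbf A}^n \times V \to {\mathbf A}^n$ descends to a rationally trivial bundle over ${\mathbf A}^n/G$. This reduces to Hilbert 90 for $\mathrm{GL}_N$ over the function field $k({\mathbf A}^n)^G$, combined with the birational existence of the quotient guaranteed by assumption (ii). The role of (i) is precisely to supply a linear model $V$ of dimension equal to $\dim G$ containing $G$ as an open orbit, which is what makes the two descriptions of $k({\mathbf A}^n \times V)^G$ match up and yield pure transcendence.
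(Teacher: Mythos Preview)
Your argument is correct and is essentially the one that underlies the cited result \cite[Lem.\ 2.2 \& Thm.\ 2.2]{Po2013_1}; note that the present paper does not supply its own proof of this theorem but only quotes it. The two-fold computation of $k({\mathbf A}^n\times V)^G$ for the diagonal action---once via the no-name lemma (using (ii) and linearity of the $G$-action on $V$) as $k({\mathbf A}^n)^G(t_1,\ldots,t_N)$, and once via the open orbit $G\subset V$ (using (i)) as $k({\mathbf A}^n)$---is exactly the mechanism behind the original proof, and your identification of $s=N=\dim G$ matches the explicit bound recorded in the remark following the theorem.
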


 \begin{remark} By
  \cite[Thm. 2.6]{Po2013_1}, if $G$ in  Theorem \ref{strl} is
  a torus, then  one can take $s=\dim(G)$.
  \end{remark}

 \begin{remark}
 In general,  $s$ in Theorem \ref{strl} is strictly positive.
 For example, by
  \cite[Cor. 2.5]{Po2013_1}, the Cremona group ${\rm Cr}_n$ for
  $n\geqslant 5$ contains an $(n-3)$-dimensional affine
  algebraic torus, which is not conjugate in ${\rm Cr}_n$
  to a subgroup of
  ${\mathrm {GL}}_n$.
 \end{remark}

\subsection*{12. Equivariantly Flattenable Groups and
Special Groups in the Sense of Serre}

Recall from \cite[4.1]{Se1958}
that an algebraic group $G$ is called {\it special} if   every
principal $G$-bundle (which, by definition
\cite[2.2]{Se1958}, is locally trivial in \'etale topology)
is locally trivial in the Zariski topology. By
\cite[Sect. 4.1, Thm. 1]{Se1958} special group
is automatically connected and affine. Special
groups are classified:
\begin{theorem}
\label{classp}
The following properties of a connected affine algebraic
group $G$ are equivalent:
\begin{enumerate}[\hskip 4.2mm\rm(a)]
\item $G$ is special;
\item maximal connected semisimple subgroups of $G$
are isomorphic to
a group of type
\begin{equation*}
{\mathrm {SL}}_{n_1}\times\cdots
\times {\mathrm {SL}}_{n_r}\times {\rm Sp}_{2m_1}\times
\cdots \times {\rm Sp}_{2m_s}.
\end{equation*}
\end{enumerate}
\end{theorem}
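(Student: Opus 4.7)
The plan is to prove (b)$\Rightarrow$(a) directly using the closure properties of the class of special groups, and to invoke the classical Serre--Grothendieck classification for (a)$\Rightarrow$(b).

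For (b)$\Rightarrow$(a), I would first recall from \cite{Se1958} the foundational facts: $\mathbb{G}_a$, $\mathbb{G}_m$, $\mathrm{SL}_n$, and $\mathrm{Sp}_{2n}$ are special, and the class of special groups is closed under direct products and under group extensions (both kernel and cokernel special implies middle special, via the long exact sequence of pointed cohomology sets). Given $G$ satisfying (b), apply the Levi decomposition $G = L\ltimes \mathcal{R}_u G$. The unipotent radical $\mathcal{R}_u G$ is an iterated extension of copies of $\mathbb{G}_a$, hence special. The Levi subgroup $L$ sits in an exact sequence $1\to [L,L]\to L\to L/[L,L]\to 1$ whose quotient is a torus; since $k$ is algebraically closed, this torus is split and hence a product of $\mathbb{G}_m$'s, so special. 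The derived group $[L,L]$ is a maximal connected semisimple subgroup of $G$, and by hypothesis a product of $\mathrm{SL}$'s and $\mathrm{Sp}$'s, hence special. Extension-closure yields $L$ special, and then one further extension gives $G$ special.

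For (a)$\Rightarrow$(b), this is the deep classification direction, originally due to Grothendieck. The strategy is first to reduce to the case of a simply connected semisimple special group, and then to invoke the classification of such groups. For the reduction, one uses the Levi decomposition together with closure properties of specialness to transfer the specialness of $G$ to that of its maximal connected semisimple subgroup, after accounting for the contribution of the radical (which is automatically special by the argument above). Grothendieck's classification then shows that a simply connected simple special algebraic group must be of type $\mathsf{A}_n$ or $\mathsf{C}_n$: for each Dynkin type outside $\mathsf{A}$ and $\mathsf{C}$, one exhibits an explicit principal $G$-bundle that is \'etale-locally but not Zariski-locally trivial, based on the Weyl group action on the character lattice of a maximal torus and the resulting cohomological obstructions.

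The main obstacle lies precisely in this last step, the case-by-case elimination of types $\mathsf{B}_n$, $\mathsf{D}_n$, $\mathsf{G}_2$, $\mathsf{F}_4$, $\mathsf{E}_6$, $\mathsf{E}_7$, $\mathsf{E}_8$, which requires Grothendieck's substantive classification argument and the accompanying analysis of the center of the simply connected cover (to rule out also non-simply-connected quotients of $\mathrm{SL}$ and $\mathrm{Sp}$ such as $\mathrm{PGL}_n$). Since this is a classical, well-established result, I would conclude the proof by citing it rather than reproducing the technical machinery.
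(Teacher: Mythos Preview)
Your proposal is correct and aligns with the paper's own treatment: the paper's proof consists entirely of the two citations ``(a)$\Rightarrow$(b) is proved in \cite{Gr1958}, and (b)$\Rightarrow$(a) in \cite{Se1958}''. You go further by actually sketching Serre's argument for (b)$\Rightarrow$(a) via extension-closure and the Levi decomposition, and by outlining the shape of Grothendieck's classification for (a)$\Rightarrow$(b) before ultimately citing it---so your write-up is strictly more informative than the paper's, but rests on the same two sources in the same roles.
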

\begin{proof} The implications
(a)$\Rightarrow$(b) and (b)$\Rightarrow$(a) are
proved respectively in \cite{Gr1958} and
\cite{Se1958}.
\quad $\square$ \end{proof}

In \cite[Lem. 2.2]{Po2013_1} is sketched a reduction
of the following claim to  \cite[Thm. 1.4.3]{Po1994}.
Below is given the complete self-contained argument.

\begin{theorem}
\label{flsp}
Every linearly equivariantly flattenable affine
algebraic group $G$ is special.
\end{theorem}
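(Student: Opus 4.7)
The plan is to start from an arbitrary principal $G$-bundle $\pi\colon P\to Y$ (locally trivial in the \'etale topology, per the definition recalled in the paragraph preceding the theorem) and to exhibit a Zariski-local section at each point of $Y$. The key idea is to exploit the linear equivariant flattenability of $G$ to realize $P$ as an \emph{open} subscheme of an associated algebraic vector bundle $E$ over $Y$, and then to manufacture local sections of $P$ by restricting local vector-bundle sections of $E$ that happen to hit $P$.

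Concretely, I would first use Definitions \ref{def2}--\ref{def3} to fix a finite-dimensional $G$-module $V={\mathbf A}^{\hskip -.4mm n}$ (with linear $G$-action) and a $G$-equivariant open embedding $\iota\colon G\hookrightarrow V$; setting $v_0:=\iota(e)$, the image $\iota(G)$ is the open orbit $\mathcal{O}:=G\cdot v_0\subset V$, and the stabilizer of $v_0$ is trivial because $G$ acts freely on itself. Next, given any principal $G$-bundle $\pi\colon P\to Y$, I form the associated bundle $E:=P\times^G V$. Because $V$ is a finite-dimensional $G$-module with linear action, $E$ is a genuine algebraic vector bundle of rank $n$ on $Y$, so in particular it is Zariski-locally trivial. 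The $G$-equivariant open inclusion $\mathcal{O}\hookrightarrow V$ then induces an open immersion of associated bundles
\[
P \;=\; P\times^G \mathcal{O} \;\hookrightarrow\; P\times^G V \;=\; E,\qquad p\mapsto [p,v_0].
\]

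The final step is to build sections of $\pi$. For any $y_0\in Y$, pick $p_0\in P_{y_0}\subset E_{y_0}$. Trivializing $E$ on a Zariski open $U\ni y_0$, I take a vector-bundle section $s\colon U\to E|_U$ with $s(y_0)=p_0$ (any constant section in the trivialization will do). Openness of $P$ in $E$ then guarantees a smaller open $U'\subset U$ with $s(U')\subset P$, and $s|_{U'}$ is the desired local section of $\pi$ over $U'$. Thus $\pi$ is Zariski-locally trivial, so $G$ is special.

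The main technical input --- really the only nonformal point --- is that the associated bundle $P\times^G V$ built from a linear finite-dimensional $G$-module is Zariski-locally trivial as a vector bundle, not merely \'etale-locally so. This I would justify by \'etale descent for coherent sheaves (an \'etale-locally free coherent sheaf is locally free, hence Zariski-locally trivial), or equivalently by invoking Serre's theorem that $\mathrm{GL}_n$ is itself special. Once this is in hand, every other step in the plan is a routine manipulation of associated-bundle constructions together with the openness of the embedding $P\hookrightarrow E$.
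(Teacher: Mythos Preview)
Your argument is correct, but it follows a genuinely different route from the paper's. The paper does \emph{not} test an arbitrary principal $G$-bundle against the definition of ``special''. Instead, it embeds $G$ as a closed subgroup of ${\rm GL}(V)$ (using faithfulness of the module $V$ furnished by linear equivariant flattenability), then works inside $L:=V^{\oplus n}$ with $n=\dim V$, where ${\rm GL}(V)$ acts with an open orbit $\widetilde{\mathcal O}$ with trivial stabilizers. Using the open $G$-orbit $\mathcal O\subset V$ in the first summand, it builds an affine slice $A\subset L$ meeting each $G$-orbit in $\pi_1^{-1}(\mathcal O)\cap\widetilde{\mathcal O}$ in exactly one point, thereby producing a \emph{rational section} of ${\rm GL}(V)\to{\rm GL}(V)/G$. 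It then invokes Serre's criterion \cite[Sect.~4.3, Thm.~2]{Se1958}: a closed subgroup of a special group for which the quotient map admits a rational section is itself special. Since ${\rm GL}(V)$ is special, $G$ is special.

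Both approaches ultimately rest on the specialness of ${\rm GL}_n$---you use it to pass from \'etale to Zariski local triviality of the associated vector bundle $P\times^G V$, while the paper uses it through Serre's subgroup criterion. Your argument is more direct and conceptually transparent (and avoids the auxiliary construction with $V^{\oplus n}$). On the other hand, the paper's slice construction yields a bonus not visible in your proof: the affine section $A$ is rational, so ${\rm GL}(V)/G$ is a rational variety (this is recorded in the paper as Remark~\ref{ratio}).
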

\begin{proof}
By Definitions \ref{def2}, \ref{def3}, there is
a finite-dimensional algebraic $G$-module $V$ with
a $G$-orbit $\mathcal O$ such that
\begin{enumerate}[\hskip 2.2mm\rm(i)]
\item\label{ope1} $\mathcal O$ is open in $V$;
\item\label{tri1} the points of $\mathcal O$
have trivial $G$-stabilizers.
\end{enumerate}
By \eqref{tri1}, $V$  is faithful; hence we may
(and shall) identify $G$ with a closed subgroup of ${\mathrm {GL}}(V)$.

Next, consider the $i$th direct summand of
$$L:=V\oplus\cdots\oplus V
\quad \mbox{($n:=\dim(V)$ copies)},
$$
as a linear subspace $V_i$ of $L$, denote by $\pi_i$
the natural projection $L\to V_i$, and identify $V$ with $V_1$.
The ${\mathrm {GL}}(V)$-module $L$ contains
a ${\mathrm {GL}}(V)$-orbit $\widetilde{\mathcal O}$
such that
\begin{enumerate}[\hskip 3.2mm\rm(i)]
\item[\rm(iii)]\label{ope2} $\widetilde{\mathcal O}$ is open in $L$;
\item[\rm(iv)]\label{tri2} the points of $\widetilde{\mathcal O}$
have trivial ${\mathrm {GL}}(V)$-stabilizers.
\end{enumerate}

From \eqref{ope1}, (iii) we infer that
$\mathcal O\cap \pi_1(\widetilde{\mathcal O})\neq\varnothing$
and $\pi_1^{-1}(\mathcal O)\cap \widetilde{\mathcal O}$
is an nonempty open subset
of $\widetilde{\mathcal O}$.
Take a point
\begin{equation}\label{a}
a\in \mathcal O\cap \pi(\widetilde{\mathcal O})
\end{equation}
and consider in $L$ the affine subspace
\begin{equation}\label{A}
A:=\{a+v_2+\cdots +v_{n}\in L\mid v_i\in V_i  \mbox{ for all $i$}\}.
\end{equation}

From (iii), \eqref{a}, \eqref{A} we deduce that
$A\cap \widetilde{\mathcal O}$ is a nonempty
open subset of $A$, and from \eqref{tri1} that the
$G$-orbit of every point of
$\pi_1^{-1}(\mathcal O)\cap \widetilde{\mathcal O}$
intersects $A\cap \widetilde{\mathcal O}$ at a single point.
This means that the natural action of  $G$ on $\widetilde{\mathcal O}$
admits a rational section. In view of (iv), this,
in turn, means that the natural map
${\mathrm {GL}}(V)\to {\mathrm {GL}}(V)/G$ admits
a rational section. Since the group ${\mathrm {GL}}(V)$
is special, this implies, according to \cite[Sect. 4.3, Thm. 2]{Se1958},
that the group $G$ is special as well.
\quad $\square$ \end{proof}
\begin{remark}\label{ratio} Since $A$ is rational and
$A\cap \widetilde{\mathcal O}$ is open in $A$,  the above
proof of Theorem \ref{flsp} shows that
${\mathrm {GL}}(V)/G$ is a rational variety.
\end{remark}

\subsection*{13. Classifying equivariantly flattenable groups}

Theorem \ref{LEF} natural\-ly leads to the following
\begin{problem}\label{classif} Obtain a classification
of equivariantly flattenable
reductive algebra\-ic groups.
\end{problem}

\begin{theorem} For every nontrivial equivariantly flattenable
reductive algeb\-ra\-ic group $G$, the following properties hold:
\begin{enumerate}[\hskip 4.2mm\rm(a)]
\item the derived group $L$ of
$G$ is a semisimple group of type
\begin{equation*}
{\mathrm {SL}}_{n_1}\times\cdots
\times {\mathrm {SL}}_{n_r}\times {\rm Sp}_{2m_1}\times
\cdots \times {\rm Sp}_{2m_s};
\end{equation*}
\item the radical of $G$ is a central torus $C$ of positive dimension;
\item $G=L\cdot C$ and $C\cap L$ is finite.
\end{enumerate}
\end{theorem}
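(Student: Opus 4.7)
My plan is to derive all three claims by combining the earlier results in this paper with the standard structure theory of reductive groups, observing that the hard work has already been done in Theorems \ref{RLEF}, \ref{flsp} and \ref{classp}, and in Corollary \ref{fre}.

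First I would dispose of (c) and the structural part of (b) by pure structure theory. Because $G$ is connected reductive, its radical $C=\mathcal RG$ coincides with the identity component of its center and is therefore a central torus, the derived group $L=[G,G]$ is semisimple, one has $G=L\cdot C$, and $L\cap C$ is finite (these are standard facts, see e.g.\ \cite{Bo1991}). In particular $L$ is contained in every connected semisimple subgroup containing it, and conversely any connected semisimple subgroup $L'\supseteq L$ satisfies $L'=[L',L']\subseteq[G,G]=L$, so $L$ is \emph{the} unique maximal connected semisimple subgroup of $G$.

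Next I would prove the positivity statement in (b). By Definitions \ref{def2} and \ref{def3}, equivariant flattenability of $G$ yields in particular a (non-equivariant) open embedding of $G$ into an affine space, so $G$ is flattenable in the sense of Definition \ref{def1}. Since $G$ is nontrivial and connected reductive, Corollary \ref{fre} applies and gives that the center $Z(G)$ has positive dimension. As $C=Z(G)^\circ$, this forces $\dim(C)>0$, completing (b).

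Finally I would prove (a) by running the chain Theorem \ref{RLEF} $\Rightarrow$ Theorem \ref{flsp} $\Rightarrow$ Theorem \ref{classp}. Since $G$ is reductive and equivariantly flattenable, Theorem \ref{RLEF} upgrades this to linear equivariant flattenability; then Theorem \ref{flsp} shows $G$ is special in the sense of Serre; then the implication (a)$\Rightarrow$(b) of Theorem \ref{classp} identifies the maximal connected semisimple subgroups of $G$ with products of factors of type $\mathrm{SL}$ and $\mathrm{Sp}$. By the uniqueness observation above, the derived group $L$ is this unique maximal subgroup and hence has the asserted isomorphism type. No step is genuinely an obstacle; the only thing to be careful about is the passage \emph{equivariantly flattenable} $\Rightarrow$ \emph{flattenable} (so that Corollary \ref{fre} applies) and the identification of $L$ with the maximal connected semisimple subgroup (so that Theorem \ref{classp} applies to $L$ itself rather than just to some abstract subgroup).
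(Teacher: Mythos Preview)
Your proof is correct and follows essentially the same route as the paper: structure theory of reductive groups (\cite[14.2, Prop.]{Bo1991}) plus Corollary \ref{fre} for (b) and (c), and Theorems \ref{flsp}, \ref{classp} for (a). You are simply more explicit than the paper in two places --- you spell out the passage through Theorem \ref{RLEF} (the paper's proof silently applies Theorem \ref{flsp} to an equivariantly flattenable group, leaving the upgrade to \emph{linearly} equivariantly flattenable implicit), and you justify why the derived group $L$ is in fact a maximal connected semisimple subgroup of $G$, which the paper merely asserts.
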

\begin{proof} Since $L$ is a maximal connected
semisimple subgroup of $G$,
combining Theorems \ref{flsp} and \ref{classp} yields (a).
Combining Corollary \ref{fre}
and \cite[14.2, Prop.]{Bo1991}
implies (b) and (c).
\quad $\square$ \end{proof}

Problem \ref{classif} looks manageable. Initially, being
influenced by Example \ref{EEE}.4,  the author
was
even
overoptimistic and put forward the conjecture that
all equivariantly flattenable reductive algebraic groups
are that of type \eqref{prod{GL}} (see \cite[p. 221]{Po2013_1});
this overoptimism was shared by some of the participants of the 2013
Oberwolfach meeting on algebraic groups who even sketched
a plan of possible proof. So it  came as a surprise, when
in \cite{BGM2017} the examples of equivariantly flattenable
reductive groups of a type distinct from \eqref{prod{GL}}
have been revealed, making Problem \ref{classif} even
more intriguing. Below we describe them.

\begin{theorem}[{{\cite[2.1]{BGM2017}}}]\label{BGM}
For every positive integer $n$, the group
\begin{equation}\label{G}
G:={\rm Sp}_{2n}\times {\mathrm {GL}}_{2n-1}\times
{\mathrm {GL}}_{2n-2}\times\cdots\times {\mathrm {GL}}_{1}
\end{equation}
is equivariantly flattenable.
\end{theorem}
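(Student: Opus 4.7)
The plan is to exhibit a finite-dimensional algebraic $G$-module $V$ of dimension $\dim G$ and a vector $v_0 \in V$ whose $G$-stabilizer is trivial; the orbit map $\mu\colon G \to V$, $g \mapsto g\cdot v_0$, will then be a $G$-equivariant injection whose image has dimension $\dim V$, and as a $G$-orbit in the irreducible $V$ it will be open in its closure, giving the desired $G$-equivariant open embedding. By Definitions \ref{def2} and \ref{def3}, this yields equivariant flattenability of $G$.

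The natural building blocks will be the defining representations: $W = k^{2n}$ (the standard symplectic $\mathrm{Sp}_{2n}$-module with form $\omega$) and $U_m = k^m$ (the standard $\mathrm{GL}_m$-module for $1 \leq m \leq 2n-1$). Setting $U_{2n} := W$, my dimensionally correct candidate for $V$ is the cascading module
\[
V := \bigoplus_{m=1}^{2n-1} \mathrm{Hom}(U_m, U_{m+1}) \; \oplus \; W,
\]
with the evident action: $(g_{m+1}, g_m) \cdot \varphi = g_{m+1} \varphi g_m^{-1}$ on each Hom-summand (with $\mathrm{Sp}_{2n}$ playing the role of $\mathrm{GL}_{2n}$ when $m+1 = 2n$), and $\mathrm{Sp}_{2n}$ acting on the final $W$-summand by left multiplication. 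Using $\sum_{m=1}^{2n-1} m(m+1) = \tfrac{(2n-1)(2n)(2n+1)}{3}$ and $\dim G = n(2n+1) + \tfrac{(2n-1)(2n)(4n-1)}{6}$, a direct manipulation will give $\dim V = \dim G$.

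For $v_0$, I would take $v_0 = (\iota_1, \ldots, \iota_{2n-1}, w_0)$ with the $\iota_m \in \mathrm{Hom}(U_m, U_{m+1})$ chosen as generic injections and $w_0 \in W$ a generic vector adapted to a symplectic basis $(e_1, \ldots, e_{2n})$ satisfying $\omega(e_{2i-1}, e_{2i}) = 1$. The relations $g_{m+1}\iota_m = \iota_m g_m$ will propagate up the cascade, pinning each $g_{m+1}$ to contain $g_m$ in its leading $m\times m$ block. The symplectic identities $\omega(g_{2n}e_i, g_{2n}e_j) = \omega(e_i, e_j)$, together with $g_{2n}w_0 = w_0$, will then cascade back down the chain and collapse the residual upper-triangular/scaling parameters, forcing each $g_m$ to equal the identity. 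Once the stabilizer is trivial, openness of the orbit will follow from $\dim(G\cdot v_0) = \dim G = \dim V$ combined with the standard fact that a $G$-orbit of a connected algebraic group in an irreducible variety is open in its closure.

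The main obstacle will be proving triviality of the stabilizer of $v_0$: a naive choice (standard inclusions $\iota_m$ and $w_0 = e_{2n}$) leaves a positive-dimensional residual stabilizer arising from scaling-and-upper-triangular symmetries of the cascade that survive the symplectic condition on $\mathrm{Sp}_{2n}$, so one must either carefully perturb $v_0$ to a fully generic configuration, or refine $V$ to a suitable modification of the same total dimension $\dim G$---the explicit choice in \cite{BGM2017} accomplishes exactly this.
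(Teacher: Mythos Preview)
Your proposal is correct and follows essentially the same approach as the paper: the module $V=\bigoplus_{m=1}^{2n-1}\mathrm{Hom}(U_m,U_{m+1})\oplus W$ is exactly the paper's $\mathrm{Mat}_{2n\times 1}\oplus\mathrm{Mat}_{2n\times(2n-1)}\oplus\cdots\oplus\mathrm{Mat}_{2\times 1}$, the dimension count matches, and both you and the paper defer the verification of a trivial-stabilizer vector to \cite{BGM2017}. The only cosmetic difference is that the paper lets $\mathrm{GL}_m$ act on the right via transpose rather than inverse, which is immaterial since $B\mapsto (B^{-1})^{\top}$ is an automorphism of $\mathrm{GL}_m$.
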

\begin{proof}[Sketch of proof] Consider the vector space
\begin{equation}\label{V}
V:={\rm Mat}_{2n\times 1}\oplus {\rm Mat}_{2n\times (2n-1)}
\oplus {\rm Mat}_{(2n-1)\times (2n-2)}\oplus\cdots\oplus {\rm Mat}_{2\times 1}
\end{equation}
and the linear action of $G$ on $V$ defined for the elements
\begin{equation*}
\begin{gathered}
g:=(A,  B_{2n-1},  B_{2n-2},\ldots, B_2,  B_1)\in G,
A\in {\rm Sp}_n, B_d\in {\mathrm {GL}}_{d},\\
v:=(X,  Y_{2n-1},  Y_{2n-2},\ldots, Y_{1})\in V, X\in
{\rm Mat}_{2n\times 1},
Y_d\in {\rm Mat}_{(d+1)\times d}
\end{gathered}
\end{equation*}
by the formula
\begin{equation*}
g\cdot v:=(AX,  AY_{2n-1}B_{2n-1}^{\top},  B_{2n-1}
Y_{2n-2} B_{2n-2}^{\top},\ldots, B_{2}Y_{1} B_{1}^{\top}).
\end{equation*}

From \eqref{G} and \eqref{V} we deduce
\begin{equation}\label{dim}
\dim(G)\overset{\eqref{G}}{=\hskip -2mm=}2n^2+n+
\sum_{i=1}^{2n-1}i^2=
2n+\sum_{i=1}^{2n-1}i(i+1)
\overset{\eqref{V}}{=\hskip -2mm=}\dim(V).
\end{equation}

Next, one shows the existence of a point $v_0\in V$ whose
$G$-stabilizer is trivial. In view of \eqref{dim}, the orbit
map $G\to V$, $g\mapsto g\cdot v_0$ is then a
$G$-equivariant open embedding.
\quad $\square$ \end{proof}

\begin{remark}\label{other} Since the group ${\rm Sp}_{2n}$
is not flattenable by Corollary \ref{fre}, but
${\mathrm {GL}}_{2n-1}\times {\mathrm {GL}}_{2n-2}\times
\cdots\times {\mathrm {GL}}_{1}$ is flattenable, Theorem \ref{BGM} provides
other  (than that in the proof of Theorem \ref{aLZCP})
examples, which yield the affirmative answer to (LZCP).
\end{remark}

\subsection*{14. Locally Equivariantly Flattenable Varieties}
Similarly to flatten\-ability, equivariant flattenability admits an evident local ver\-sion:

\begin{definition}[{{\rm \cite[Def. 4(iii)]{Pe2017},
up to change of terminology}}]\label{Greq}
A variety $X$ endowed with an action of an algebraic
group $G$ is called
{\it equivariantly} {\lb}respecti\-vely, {\it linearly
equivariantly}{\rb} {\it locally flattenable}  if  for every
point  $x\in X$ there is an equivariantly {\lb}respectively,
linearly equivariantly{\rb} flattenable $G$-stable open
subset of $X$ containing $x$.
\end{definition}

Definition \ref{Greq} leads to the following
equivariant version of Gromov's ques\-tion\;\eqref{Gr}:
\begin{equation}\label{EGr}
\begin{split}
&\mbox{\it Is every irreducible smooth rational $G$-variety}\\[-1mm]
&\mbox{\it equivariantly locally flattenable? }
\end{split}
\tag{EqGr}
\end{equation}
The examples in \cite[Sect. 4]{Pe2017}  show that the
answer to \eqref{EGr} is ne\-ga\-tive.

\end{document}